\newtheorem{theorem}{Theorem}[section]
\newtheorem{lemma}[theorem]{Lemma}
\newtheorem{corollary}[theorem]{Corollary}
\theoremstyle{definition}
\newtheorem{definition}[theorem]{Definition}
\theoremstyle{remark}
\newtheorem{remark}[theorem]{Remark}
\newtheorem{question}[theorem]{Question}
\numberwithin{equation}{section}
\newcommand{\CC}{\mathbb C}
\newcommand{\HH}{\mathbb H}
\newcommand{\NN}{\mathbb N}
\newcommand{\PP}{\mathbb P}
\newcommand{\QQ}{\mathbb Q}
\newcommand{\ZZ}{\mathbb Z}
\newcommand{\cD}{\mathcal D}
\newcommand{\cH}{\mathcal H}
\newcommand{\SL}{\mathop{\mathrm {SL}}\nolimits}
\newcommand{\Sp}{\mathop{\mathrm {Sp}}\nolimits}
\newcommand{\Orth}{\mathop{\null\mathrm {O}}\nolimits}
\newcommand{\sign}{\mathop{\mathrm {sign}}\nolimits}
\newcommand{\Mp}{\mathop{\mathrm {Mp}}\nolimits}
\newcommand{\rank}{\mathop{\mathrm {rank}}\nolimits}
\newcommand{\latt}[1]{{\langle{#1}\rangle}}
\newcommand{\Kthree}{\mathop{\mathrm {K3}}\nolimits}
\newcommand{\Grit}{\operatorname{Grit}}
\newcommand{\Borch}{\operatorname{Borch}}
\def\div{\operatorname{div}}
\newcommand{\m}{\operatorname{mod}}
\newcommand{\im}{\operatorname{Im}}
\newcommand{\mult}{\operatorname{mult}}
\newcommand{\norm}{\operatorname{Norm}}
\newcommand{\Sch}{\operatorname{Sch}}
\newcommand{\II}{\operatorname{II}}
\newcommand{\ord}{\operatorname{ord}}
\newcommand{\abs}[1]{\lvert#1\rvert}
\begin{document}

\title[Theta block conjecture]{Theta block conjecture for Siegel paramodular forms}

\author{Haowu Wang}

\address{Max-Planck-Institut f\"{u}r Mathematik, Vivatsgasse 7, 53111 Bonn, Germany}

\email{haowu.wangmath@gmail.com}

\subjclass[2010]{11F30, 11F46, 11F50, 11F55, 14K25}

\date{\today}

\keywords{Borcherds product, Gritsenko lift, paramodular forms, Jacobi forms, reflective modular forms, theta blocks}

\begin{abstract}
The theta-block conjecture proposed by Gritsenko--Poor--Yuen in 2013 characterizes Siegel paramodular forms which are simultaneously Borcherds products and additive Jacobi lifts. In this paper, we prove this conjecture for two new infinite series of theta blocks of weights 2 and 3. The proof is based on Scheithauer's classification of reflective modular forms of singular weight. 
\end{abstract}

\maketitle

\section{Introduction}
Let $N$ be a positive integer. A Siegel paramodular form of level $N$ is a Siegel modular form of degree two with respect to the paramodular group of level $N$ defined as (see \cite{GN98})
$$
\Gamma_N=\left(\begin{array}{cccc}
  * & N* & * & * \\ 
  * & * & * & */N \\ 
  * & N* & * & * \\ 
  N* & N* & N* & *
  \end{array}   \right) \cap \Sp_2(\QQ),\quad \text{\rm all}\ *\in \ZZ.
$$
The Fourier-Jacobi coefficients of a Siegel paramodular form are holomorphic Jacobi forms in the sense of Eichler-Zagier \cite{EZ85}. Conversely, one can construct paramodular forms from Jacobi forms. The first method is the additive Jacobi lifting due to Gritsenko (see \cite{Gri94}) which sends a holomorphic Jacobi form to a paramodular form. The second method is a variant of Borcherds product proposed by Gritsenko--Nikulin (see \cite{Bor98,GN98}), which lifts a weakly holomorphic Jacobi form of weight $0$ to a meromorphic paramodular form with known divisors. In some sense, the additive Jacobi lifting (Gritsenko lift) is like an infinite sum and the Borcherds product has an infinite product expansion. In \cite{GPY15}, V. Gritsenko, C. Poor and D. Yuen investigated the paramodular forms which are simultaneously Borcherds 
products and Gritsenko lifts. One sees from their shapes that if the Gritsenko lift $\Grit(\phi)$ is a Borcherds product then the holomorphic Jacobi form $\phi$ must be a theta block defined below. Moreover, $\phi$ has vanishing order one in $q=e^{2\pi i \tau}$ (see \cite{PSY18}). Furthermore, it was conjectured in \cite{GW18} that $\phi$ is a pure theta block.

Let $\NN$ be the set of nonnegative integers and $f:\NN \to \ZZ$ be a function with a finite support.
A {\it theta block} (see \cite{GSZ19} for a full theory) is a function of the form
\begin{equation}\label{thblock}
\Theta_f(\tau,z)=\eta^{f(0)}(\tau)
\prod_{a=1}^{\infty}(\vartheta_a(\tau,z)/\eta(\tau))^{f(a)},
\end{equation}
where $\eta(\tau)=q^{\frac{1}{24}}\prod_{n\ge 1}(1-q^n)$ is the Dedekind eta-function, $\vartheta_a(\tau,z)=\vartheta(\tau,az)$ and 
$$
\vartheta(\tau,z)= q^{\frac{1}{8}}
(\zeta^{\frac{1}{2}}-\zeta^{-\frac{1}{2}}) \prod_{n\geq 1} (1-q^{n}\zeta)
(1-q^n \zeta^{-1})(1-q^n), \quad \zeta^r=e^{2\pi i rz}
$$
is the odd Jacobi theta-series which is a holomorphic Jacobi form of weight $\frac{1}{2}$ and index $\frac{1}{2}$ with a multiplier system of order $8$ (see \cite{GN98}). The function $\Theta_f$ is called a \textit{pure theta block} if $f$ is nonnegative on $\NN$. The pure theta block $\Theta_f$ is just a weak Jacobi form in general, but it will be a holomorphic Jacobi form for some good $f$. This gives a great way to construct holomorphic Jacobi forms of small weight explicitly.

In \cite{GPY15}, V. Gritsenko, C. Poor and D. Yuen formulated the following conjecture which gives a sufficient condition for a Gritsenko lift being a Borcherds product.
\smallskip

\noindent
{\bf Theta block conjecture.} 
{\it Let the pure theta block $\Theta_f$ be a holomorphic Jacobi form of weight 
$k$ and index $N$ with vanishing order one in $q=e^{2\pi i\tau}$. 
We define a weak Jacobi form $\Psi_f=-(\Theta_f\lvert T_{-}(2))/\Theta_f$ of weight $0$ and index $N$, where $T_{-}(2)$ is the index raising Hecke operator. Then
$$
\Grit(\Theta_f)=\Borch(\Psi_f)
$$
is a holomorphic paramodular form of weight $k$ with respect to $\Gamma_N$.}
\smallskip

It is not clear how difficult the conjecture will turn out to be, but there is no obvious way to attack it.  A natural idea is to consider the quotient $\Grit(\Theta_f) / \Borch(\Psi_f)$. If one can prove $\div(\Grit(\Theta_f))\supset \div(\Borch(\Psi_f))$, then $\Grit(\Theta_f)/\Borch(\Psi)$ will be a holomorphic modular form of weight zero and then equals a constant by K\"ocher's principle. Unfortunately, it is very hard to prove this because the divisor of $\Borch(\Psi)$ is usually very complicated. To pass through  this difficulty, one lifts $\Theta_f$ to a Jacobi form $\Theta_L$ in many variables associated to 
a certain positive definite lattice $L$ (i.e. Jacobi forms of lattice index, see \cite{Gri94, CG13}). Then we study the same question in the context of modular forms on  orthogonal groups of signature $(2,n)$. Notice that paramodular forms can be realized as orthogonal modular forms of signature $(2,3)$ (see \cite{GN98}).  Since Jacobi forms in many variables have stronger symmetry, the corresponding Borcherds product may have much simpler divisor such that it is easy to prove the including relation between divisors of Gritsenko lift and Borcherds product. Then one can prove the desired identity by considering the specializations of the identity in many variables.  When the lattice $L$ satisfies the $\norm_2$ condition defined in \cite[(3.8)]{GW18}, the above algorithm works well because the divisor of Borcherds product is determined completely by the divisor of theta block in this case (see \cite{GW18} for more details). Following this idea, the conjecture has been proved for all theta blocks of weights larger than $3$ (see \cite[\S 8]{GPY15}), for an infinite series of theta blocks of weight $3$ related to the root system $3A_2$ (see \cite{Gri18, GSZ19}), and for an infinite series of theta blocks of weight $2$ related to the root system $A_4$ (see \cite{GW17, GW18}).

In this paper, we prove the conjecture for an infinite series of theta blocks of weight 2 related to the root system $A_1\oplus B_3$, and for an infinite series of theta blocks of weight 3 related to the root system $2A_1\oplus B_2\oplus A_2$. 

\begin{theorem}\label{th:wt2}
Let $\mathbf{a}=(a_1, a_2, a_3, a_4)\in \ZZ^4$. Then we have
$$
\varphi_{2,\mathbf{a}}=\eta^{-6}
\vartheta_{2a_1+a_2+a_4}\vartheta_{a_2}\vartheta_{a_2+a_3}\vartheta_{a_2+2a_3+2a_4}
\vartheta_{a_2+a_3+a_4}\vartheta_{a_2+a_3+2a_4}\vartheta_{a_3}
\vartheta_{a_3+a_4}\vartheta_{a_3+2a_4}\vartheta_{a_4}
\in J_{2,N(\mathbf{a})},
$$
where
$$
N(\mathbf{a})=2a_1^2 + 2a_1a_2+2a_1a_4+3a_2^2+5a_2a_3+6a_2a_4
+5a_3^2+10a_3a_4+8a_4^2.
$$
Moreover, for any  $\mathbf{a}\in \ZZ^4$ such that $\varphi_{2,\mathbf{a}}$ is not 
identically zero, one has
$$\Grit(\varphi_{2,\mathbf{a}})= 
\Borch\left(-\frac{\varphi_{2,\mathbf{a}}|T_{-}(2)}{\varphi_{2,\mathbf{a}}}
\right)\in M_2(\Gamma_{N(\mathbf{a})}).
$$
\end{theorem}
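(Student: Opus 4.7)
\emph{Plan.} Following the lattice-lifting strategy outlined in the introduction, I would realise $\varphi_{2,\mathbf{a}}$ as the scalar specialisation of a single Jacobi form of lattice index attached to $A_1\oplus B_3$, prove the theta-block identity once on the resulting signature-$(2,6)$ orthogonal group using Scheithauer's classification, and only then pull the identity back to $\Gamma_{N(\mathbf{a})}$. This parallels the argument used in \cite{GW17,GW18} for the weight-$2$ theta blocks attached to $A_4$, which also has rank $4$ and ten positive roots.

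Let $L$ denote the rank-four lattice of $A_1\oplus B_3$, whose $1+9=10$ positive roots match the ten theta factors in $\varphi_{2,\mathbf{a}}$. Form the Kac--Weyl-type product
$$
\Phi_L(\tau,\mathfrak{z})=\eta(\tau)^{-6}\prod_{\alpha\in\Delta^+(L)}\vartheta\bigl(\tau,(\alpha,\mathfrak{z})\bigr),
$$
which, by a Macdonald-type identity, is a holomorphic Jacobi form of lattice index $L$ of weight $2$ and index $1$. Identify $(a_1,a_2,a_3,a_4)\in\ZZ^4$ with a vector $\mathbf{a}\in L\otimes\QQ$ via the linear isomorphism for which the ten values $(\alpha,\mathbf{a})$, as $\alpha$ runs through $\Delta^+(L)$, are precisely the ten indices $2a_1+a_2+a_4,\,a_2,\,a_2+a_3,\,\ldots,\,a_4$ appearing in the statement. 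Specialising $\mathfrak{z}=\mathbf{a}z$ gives $\varphi_{2,\mathbf{a}}(\tau,z)=\Phi_L(\tau,\mathbf{a}z)$, automatically a holomorphic Jacobi form of weight $2$ and scalar index $\tfrac12(\mathbf{a},\mathbf{a})=N(\mathbf{a})$; this settles the first assertion.

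For the identity, put $M=2U\oplus L(-1)$, an even lattice of signature $(2,6)$ with singular weight $(6-2)/2=2$. The Gritsenko lift $\Grit(\Phi_L)$ is a holomorphic modular form on $\Orth(M)$ of this singular weight. I would then apply Scheithauer's classification of singular-weight reflective modular forms to match $\Grit(\Phi_L)$ with the Borcherds product $\Borch(\Psi_L)$ on $\Orth(M)$ built from the weight-$0$ weak Jacobi form $\Psi_L=-(\Phi_L|T_{-}(2))/\Phi_L$. Uniqueness of singular-weight reflective forms, together with the fact that any such form equals the Gritsenko lift of its first Fourier--Jacobi coefficient, forces $\Grit(\Phi_L)=\Borch(\Psi_L)$ on $\Orth(M)$. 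The specialisation $\mathfrak{z}=\mathbf{a}z$ dualises to a primitive sublattice inclusion, hence to an embedding of paramodular symmetric domains $\cH_{N(\mathbf{a})}\emb\Omega_M$ compatible with $\Gamma_{N(\mathbf{a})}\emb\Orth(M)$. Functoriality of both the additive lift and the Borcherds product under such pullbacks yields
$$
\Grit(\Phi_L)|_{\mathfrak{z}=\mathbf{a}z}=\Grit(\varphi_{2,\mathbf{a}}),\qquad
\Borch(\Psi_L)|_{\mathfrak{z}=\mathbf{a}z}=\Borch\!\left(-\frac{\varphi_{2,\mathbf{a}}|T_{-}(2)}{\varphi_{2,\mathbf{a}}}\right),
$$
so the identity on $M$ descends to the desired equality in $M_2(\Gamma_{N(\mathbf{a})})$.

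\emph{The main obstacle.} The hard step is identifying $\Grit(\Phi_L)$ as a reflective singular-weight modular form on $M$: one must verify that the zero divisor of $\Phi_L$ in $L\otimes\CC$ is supported exactly on hyperplanes orthogonal to roots of $L$, with the precise multiplicities predicted by Scheithauer's list, so that the orthogonal lift is holomorphic of index $1$ and reflective on $M$. This divisor analysis depends sharply on the combinatorics of $A_1\oplus B_3$; there is no uniform argument that handles all rank-four root lattices at once, which is also why the weight-$3$ theta blocks attached to $2A_1\oplus B_2\oplus A_2$ require a separate parallel treatment later in the paper. Once reflectivity is in place, Scheithauer's classification delivers the identity on $\Orth(M)$ and the descent to $\Gamma_{N(\mathbf{a})}$ is essentially formal.
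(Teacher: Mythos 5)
Your overall architecture --- lift $\varphi_{2,\mathbf a}$ to a Jacobi form of lattice index on a rank-four lattice, prove $\Grit=\Borch$ once on the signature-$(2,6)$ orthogonal group of $2U\oplus L(-1)$ using Scheithauer's classification, and pull back along $\mathfrak z=\mathbf a z$ --- matches the paper's, and the first assertion (holomorphy and the index $N(\mathbf a)=\tfrac12(\mathbf a,\mathbf a)$) does follow from such a specialization once the lattice form is in place. But there is a genuine gap at the central step. You write that ``uniqueness of singular-weight reflective forms, together with the fact that any such form equals the Gritsenko lift of its first Fourier--Jacobi coefficient, forces $\Grit(\Phi_L)=\Borch(\Psi_L)$.'' That ``fact'' is not a fact: the claim that a singular-weight reflective Borcherds product is the additive lift of its first Fourier--Jacobi coefficient is essentially the (orthogonal version of the) theta-block conjecture being proved, and Scheithauer's classification says nothing about additive lifts. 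To invoke uniqueness you would first need to know that $\Grit(\Phi_L)$ is itself reflective, i.e.\ that it vanishes on the whole divisor \eqref{eq:div2} of the Borcherds product --- and that is the entire difficulty. The relevant lattice (the paper's $L_4$, the unique class in $\II_{4,0}(2_{\II}^{+2}5^{+3})$, of level $10$ and determinant $500$; not the naive root lattice of $A_1\oplus B_3$, which is not even) fails the $\norm_2$ condition of \cite{GW18}: its discriminant form has one class of norm $1$ and order $2$, twenty of norm $\tfrac25$ and order $5$, and thirty-one of norm $\tfrac15$ and order $10$, so $\div(\Borch(\Psi_{L_4}))$ contains many reflective divisors invisible from the $q^0$-term of the theta block, and \cite[Lemma 3.8]{GW18} only gives vanishing of the lift on the few divisors $\cD_{(0,0,\ell,1,0)}$ coming from that $q^0$-term.

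The missing ingredient, which is the paper's actual contribution here, is a two-step argument. First, one shows that the vector-valued input of the additive lift is invariant under $\Orth(D(L_4))$ up to a character of order $2$; this uses $\dim M_0(\rho_{D(L_4)})=2$ together with the decomposition $\Theta_{L_4}=\Theta_{L_4}^{(1)}-\Theta_{L_4}^{(2)}$ into pullbacks of $\Theta_{A_4}$ (a Riemann theta relation), and, via Borcherds' theta-integral construction of the additive lift, it upgrades $\Grit(\Theta_{L_4})$ to a modular form for the \emph{full} group $\Orth^+(M_{2,6})$. Second, because the level $10$ is squarefree, $\Orth^+(M_{2,6})$ acts transitively on primitive vectors of $M_{2,6}^\vee$ of fixed norm and fixed order in the discriminant group (Lemma \ref{lem:orbit}), so vanishing on one reflective divisor of each type propagates to all of them, giving $\div(\Grit(\Theta_{L_4}))\supset\div(\Psi_2^{\Sch})$ and hence equality by K\"ocher's principle. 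Your ``main obstacle'' paragraph points at the divisor of $\Phi_L$ inside $L\otimes\CC$, but the obstruction does not live there; it lives in the orbit structure of the reflective divisors on $\cD(M_{2,6})$, and without the full-group invariance plus the squarefree-level orbit lemma the argument does not close.
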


\begin{theorem}\label{th:wt3}
Let $\mathbf{b}=(b_1, b_2, b_3, b_4, b_5, b_6)\in \ZZ^6$. Then we have
$$
\varphi_{3,\mathbf{b}}=\eta^{-3}
\vartheta_{b_1}\vartheta_{2b_2+b_3-b_1}\vartheta_{b_3}\vartheta_{b_3+b_4}
\vartheta_{b_3+2b_4}\vartheta_{b_4}\vartheta_{b_5}
\vartheta_{b_6}\vartheta_{b_5+b_6}
\in J_{3,N(\mathbf{b})},
$$
where
$$
N(\mathbf{b})=b_1^2-2b_1b_2-b_1b_3+2b_2^2+2b_2b_3+2b_3^2+3b_3b_4+3b_4^2+b_5^2+b_5b_6+b_6^2.
$$
Moreover, for any  $\mathbf{b}\in \ZZ^6$ such that $\varphi_{3,\mathbf{b}}$ is not 
identically zero, one has
$$\Grit(\varphi_{3,\mathbf{b}})= 
\Borch\left(-\frac{\varphi_{3,\mathbf{b}}|T_{-}(2)}{\varphi_{3,\mathbf{b}}}
\right)\in M_3(\Gamma_{N(\mathbf{b})}).
$$
\end{theorem}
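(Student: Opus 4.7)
The strategy, sketched in the introduction, is to lift $\varphi_{3,\mathbf{b}}$ to a Jacobi form of lattice index on a root lattice $R$, pass to the orthogonal modular variety $\mathcal{D}(L)$ with $L = R \oplus 2U$, and exploit the fact that the weight $3$ coincides with the singular weight in the relevant signature. Both $\Grit(\Theta_R)$ and $\Borch(\Psi_R)$ will then be reflective modular forms of singular weight, and Scheithauer's classification will force them to agree.

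Concretely, the first step is to check that $\varphi_{3,\mathbf{b}}$ is a holomorphic Jacobi form of the stated weight and index. Summing the weights of the nine theta factors and $\eta^{-3}$ gives $9 \cdot \tfrac{1}{2} - 3 \cdot \tfrac{1}{2} = 3$; halving the sum of squares of the theta indices reproduces $N(\mathbf{b})$; and the multiplier is trivial because $\eta^{-3}$ cancels the sign character of the nine odd theta factors. The theta arguments are exactly the positive roots of $R = 2A_1 \oplus B_2 \oplus A_2$: the triple $\vartheta_{b_5}\vartheta_{b_6}\vartheta_{b_5+b_6}$ for $A_2$, the four factors in $b_3,b_4$ for $B_2$, and $(\vartheta_{b_1},\vartheta_{2b_2+b_3-b_1})$ for $2A_1$. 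This lets me lift $\varphi_{3,\mathbf{b}}$ to a Jacobi form $\Theta_R$ of weight $3$ and lattice index $R$, whose one-variable specialization along the vector $\mathbf{b}$ recovers $\varphi_{3,\mathbf{b}}$.

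The even lattice $L = R \oplus 2U$ has signature $(2,8)$, so the singular weight on $\mathcal{D}(L)$ is $(8-2)/2 = 3$. Thus both $\Grit(\Theta_R)$ and the putative Borcherds product $\Borch(\Psi_R)$ are modular forms of singular weight on $\mathrm{O}^+(L)$. The Gritsenko lift is reflective, since its Heegner divisors come from the roots of $R$ via the standard Jacobi expansion. The crux is then to match $\Grit(\Theta_R)$ with a Borcherds product. Because $R$ does not satisfy the $\norm_2$ condition of \cite{GW18}, the divisor of $\Borch(\Psi_R)$ cannot be read off directly from the theta block as in earlier work. Instead one appeals to Scheithauer's classification of reflective modular forms of singular weight, which gives a finite list of such forms up to isomorphism of the underlying lattice; both $\Grit(\Theta_R)$ and $\Borch(\Psi_R)$ must appear in this list, and matching the discriminant form and divisor multiplicities forces them to agree up to a nonzero scalar. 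The scalar is then pinned down by comparing a single Fourier coefficient, and specialization along $\mathbf{b}$ yields the claimed identity for $\varphi_{3,\mathbf{b}}$.

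The main obstacle is the classification-matching step: one must identify the correct entry in Scheithauer's list, verify that $\Borch(\Psi_R)$ is indeed holomorphic there, and check the compatibility of the relevant discriminant data between $L$ and the Scheithauer lattice. A secondary subtlety is to handle the degenerate parameters excluded by the hypothesis $\varphi_{3,\mathbf{b}} \ne 0$, where some theta argument vanishes and the theta block is identically zero, so the identity is vacuous.
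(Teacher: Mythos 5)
Your overall frame (lift $\varphi_{3,\mathbf{b}}$ to a lattice-index Jacobi form, work at singular weight $3$ on a signature $(2,8)$ lattice, invoke Scheithauer's classification) matches the paper's, but two essential points are wrong or missing. First, the lattice: the naive root lattice of $2A_1\oplus B_2\oplus A_2$ cannot serve as the index lattice --- the root lattice of $B_2$ is odd, and already $2A_1$ forces the level to be divisible by $4$, whereas the relevant lattice must have squarefree level $6$ and discriminant form $2_{\II}^{+2}3^{-3}$. The paper proceeds in the opposite direction: it starts from Scheithauer's singular-weight reflective form $\Psi_3^{\Sch}$ on $U\oplus U(6)\oplus E_6(-1)$, rewrites this lattice as $2U\oplus L_6(-1)$ for a specific even positive definite lattice $L_6$ of rank $6$ and determinant $108$ (unique in its genus), and only then reads off the theta block $\Theta_{L_6}$ as the first Fourier--Jacobi coefficient of $\Psi_3^{\Sch}$; the root system $2A_1\oplus B_2\oplus A_2$ is a mnemonic for the shape of the theta block, not the lattice on which the argument runs. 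This is exactly the ``first difficulty'' the introduction warns about.

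Second, and more seriously, your classification-matching step is circular. Scheithauer's list classifies reflective automorphic \emph{products} of singular weight; to place $\Grit(\Theta_R)$ on that list you would need to know in advance that it is a Borcherds product with reflective divisor, which is precisely the statement to be proved. Your assertion that the Gritsenko lift ``is reflective, since its Heegner divisors come from the roots of $R$'' is not an argument: the divisor of a Gritsenko lift is not readable from its input, and this lattice fails the $\norm_2$ condition precisely so that the partial vanishing extracted from the theta block via \cite[Lemma 3.8]{GW18} does \emph{not} cover all reflective divisors of $\Psi_3^{\Sch}$. The paper closes this gap with two ingredients absent from your proposal: (i) it computes $\dim M_0(\rho_{D(L_6)})=2$, exhibits a basis $\Theta_{L_6}^{(1)},\Theta_{L_6}^{(2)}$ by pulling back $\Theta_{3A_2}$, shows $\Theta_{L_6}=\Theta_{L_6}^{(1)}-\Theta_{L_6}^{(2)}$ is invariant under $\Orth(D(L_6))$ up to a character, and concludes (Lemma \ref{lem:full3}) that $\Grit(\Theta_{L_6})$ is modular for the full group $\Orth^+(M_{2,8})$; (ii) because the level is squarefree, all primitive vectors of given norm and order in the dual lattice form a single $\Orth^+$-orbit (Lemma \ref{lem:orbit}), so vanishing on part of the reflective divisors propagates to $\div(\Grit(\Theta_{L_6}))\supset\div(\Psi_3^{\Sch})$, and K\"ocher's principle gives equality up to a constant. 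Without (i) and (ii) your argument does not close; the final specialization step, with the explicit substitution for $\mathfrak{z}$ in terms of $\mathbf{b}$, is the only part that survives as stated.
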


There are two new difficulties to prove the above theorems in comparison with the previously proved cases. We next explain the detail in the case of weight 2. The case of weight 3 is similar.

(I) There are exactly four infinite series of theta blocks of weight 2 in \cite{GSZ19}. They are related to the root systems $A_4$, $B_2\oplus G_2$, $A_1\oplus B_3$ and $A_1\oplus C_3$, respectively. In order to prove the theta block conjecture for these infinite series, we need to lift them to suitable Jacobi forms of lattice index and lift the corresponding paramodular forms to some orthogonal modular forms. Thus it is  crucial to find the proper lattices.
The case of $A_4$ was solved in \cite{GW17, GW18}. From the shape of the theta block, we saw that $A_4^\vee(5)$ is the satisfying lattice. But for the other three infinite series,  the choice of the lattice is not natural at all because the root system is reducible and its symmetry is not strong enough. This is the first difficulty.  We next explain the strategy to overcome it. In general, there will be much stronger symmetry if modular forms have more variables. Therefore, it will be better to lift the paramodular forms to some orthogonal modular forms of singular weight. There is a conjecture based on experience that every Borcherds product of singular weight comes from a reflective modular form  (see \cite{Wan19}). Here, reflective modular forms are modular forms on orthogonal groups whose divisors are determined by reflections in the orthogonal group (see \cite{GN98, Gri18}).  In \cite{Sch06, Sch17}, Scheithauer classified reflective modular forms of singular weight on lattices of squarefree level. In view of these facts, we search for reflective modular forms of singular weight 2 on lattices containing two hyperbolic planes in Scheithauer's list. Finally, we found only one new such modular form and its pullbacks give the infinite series appearing in Theorem \ref{th:wt2}. We do not know what is the proper lifting for the rest two infinite series. 

(II) The second difficulty of the proof is that the corresponding lattice does not satisfy the condition $\norm_2$. Therefore we can not employ the approach in \cite{GW18} to show that the associated reflective modular form is an additive lifting. We explain how to surmount this difficulty. Firstly, we show that the additive lifting vanishes on one part of reflective divisors. Secondly, we prove that the input of additive lifting (as a vector-valued modular form) is invariant up to a character under the orthogonal group of the discriminant group of the lattice.  From the reconstruction of additive lifts due to Borcherds in terms of vector-valued modular forms, we conclude that the additive lifting is a modular form for the full orthogonal group. Combining the two facts together, it is not hard to prove the pivotal result that the additive lifting vanishes on all reflective divisors because the level of the lattice is squarefree.

The layout of this paper is as follows. In the next section, we introduce briefly reflective modular forms and Jacobi forms of lattice index. Sections 3 and 4 are devoted to the proofs of our theorems.

\section{Reflective modular forms and Jacobi forms of lattice index}
As mentioned in the introduction, we will prove the theorems in the context of orthogonal modular forms. In this section, we introduce the necessary materials.

We first fix some notations. For an even lattice $M$, we denote its dual lattice by $M^\vee$ and its discriminant form by $D(M)=M^\vee/M$. The level of $M$ is the minimal positive integer $N$ such that $N(x,x)\in 2\ZZ$ for all $x\in M^\vee$. For $v\in M^\vee$, we denote the order of $v$ in $D(M)$ by $\ord(v)$. For any integer $a$, the lattice obtained by rescaling $M$ with $a$ is denoted by $M(a)$.  

Let $M$ be an even lattice of signature $(2,n)$ with $n\geq 3$. The Hermitian symmetric domain of type IV associated to $M$ is defined as (we choose one of the two connected components)
\begin{equation*}
\cD(M)=\{[\omega] \in  \PP(M\otimes \CC):  (\omega, \omega)=0, (\omega,\bar{\omega}) > 0\}^{+}.
\end{equation*}
Let $\Orth^+(M) \subset \Orth(M)$ be the index $2$ subgroup preserving $\cD(M)$. The stable orthogonal group $\widetilde{\Orth}^+(M)$ is the kernel of the natural homomorphism $\Orth^+ (M) \to \Orth(D(M))$. Let $\Gamma$ be a finite index subgroup of $\Orth^+ (M)$ and $k\in \NN$. A modular form of weight $k$ and character $\chi: \Gamma\to \CC^*$ for $\Gamma$ is a holomorphic function $F: \cD(M)^{\bullet}\to \CC$ on the affine cone $\cD(M)^{\bullet}$ satisfying
\begin{align*}
F(t\mathcal{Z})&=t^{-k}F(\mathcal{Z}), \quad \forall t \in \CC^*,\\
F(g\mathcal{Z})&=\chi(g)F(\mathcal{Z}), \quad \forall g\in \Gamma.
\end{align*}

By \cite{Bor95}, $F$ either has weight 0 in which case it is constant, or has weight at least $n/2-1$. The minimal possible weight $n/2-1$ is called the singular weight.

For any $v\in M^\vee$ satisfying $(v,v)<0$, we define the rational quadratic divisor associated to $v$ as
$$
 \cD_v(M)=v^\perp\cap \cD(M)=\{ [Z]\in \cD(M) : (Z,v)=0\}. 
$$

A modular form $F$ is called reflective if its divisor is a union of reflective divisors $\cD_r(M)$, here $\cD_r(M)$ is called reflective if $r$ is reflective, namely $r\in M^\vee$ is primitive and the reflection $\sigma_r: x \mapsto x-\frac{2(r,x)}{(r,r)}r$ belongs to $\Orth^+(M)$. When the level of $M$ is a squarefree number $N$, a primitive vector $r\in M^\vee$ with $(r,r)<0$ is reflective if and only if there exists a positive integer $d \vert N$ such that $(r,r)=-\frac{2}{d}$ and $r$ has order $d$ in $D(M)$ (see \cite[Proposition 2.5]{Sch06}). In the case, $\cD_r$ is called a $2d$-reflective divisor. 

Reflective modular forms were first introduced by Borcherds \cite{Bor98} and Gritsenko-Nikulin \cite{GN98}, and they have applications in algebra and geometry (see e.g. a survey \cite{Gri18}). The number of such modular forms was conjectured to be finite by Gritsenko-Nikulin \cite{GN98}. Some classification results can be found in \cite{Sch06, Sch17, Ma17, Ma18a, Dit18, Wan18, Wan19}.

We next define Jacobi forms of lattice index as Fourier-Jacobi coefficients of orthogonal modular forms (see \cite{CG13} for details).
Let us assume that $M$ contains two hyperbolic planes, i.e.
$M=U\oplus U_1\oplus L(-1)$,
where $U=\ZZ e\oplus\ZZ f$ ($(e,e)=(f,f)=0$, $(e,f)=1$),
$U_1=\ZZ e_1\oplus\ZZ f_1$ are two hyperbolic planes 
and $L$ is an even positive definite lattice. 
We choose $(e,e_1,...,f_1,f)$ as a basis of $M$, 
here $...$ denotes a basis of $L(-1)$.  
At the standard 1-dimensional cusp determined by the isotropic plane $\latt{e,e_1}$, $\cD(M)$ can be realized as the following tube domain 
$$
\cH(L)=\{Z=(\tau,\mathfrak{z},\omega)\in \HH\times (L\otimes\CC)\times \HH: 
(\im Z,\im Z)>0\}, 
$$
where $(\im Z,\im Z)=2\im \tau \im \omega - 
(\im \mathfrak{z},\im \mathfrak{z})_L$. 
In this setting, a Jacobi form  is a modular form for the Jacobi group $\Gamma^J(L)$ which is the parabolic subgroup of $\Orth^+ (M)$ preserving the isotropic plane $\latt{e,e_1}$ and acting trivially on $L$. This group is the semidirect product of $\SL_2(\ZZ)$ with the integral Heisenberg group of $L$.

\begin{definition}
For $k\in\ZZ$, $t\in \NN$, a holomorphic 
function $\varphi : \HH \times (L \otimes \CC) \rightarrow \CC$ is 
called a {\it weakly holomorphic} Jacobi form of weight $k$ and index $t$ associated to $L$,
if it satisfies 
\begin{align*}
\varphi \left( \frac{a\tau +b}{c\tau + d},\frac{\mathfrak{z}}{c\tau + d} 
\right)& = (c\tau + d)^k 
\exp{\left(i \pi t \frac{c(\mathfrak{z},\mathfrak{z})}{c 
\tau + d}\right)} \varphi ( \tau, \mathfrak{z} ), \quad \left(\begin{array}{cc} 
a & b \\ 
c & d
\end{array}
\right)   \in \SL_2(\ZZ), \\
\varphi (\tau, \mathfrak{z}+ x \tau + y)&= 
\exp{\bigl(-i \pi t ( (x,x)\tau +2(x,\mathfrak{z}))\bigr)} 
\varphi (\tau, \mathfrak{z} ), \quad x,y \in L,
\end{align*}
and if it has a Fourier expansion  
\begin{equation*}
\varphi ( \tau, \mathfrak{z} )= \sum_{n\geq n_0 }\sum_{\ell\in L^\vee}f(n,
\ell)q^n\zeta^\ell,
\end{equation*}
where $n_0\in \ZZ$, $q=e^{2\pi i \tau}$ and $\zeta^\ell=e^{2\pi i (\ell,
\mathfrak{z})}$. 
If the Fourier expansion of $\varphi$ satisfies the condition
$(f(n,\ell) \neq 0 \Longrightarrow n \geq 0 )$
then $\varphi$ is called a {\it weak} Jacobi form.  If $( f(n,\ell) \neq 0 
\Longrightarrow 2n - (\ell,\ell) \geq 0 )$ (resp. $>0$)
then $\varphi$ is called a {\it holomorphic}
(resp. {\it cusp}) Jacobi form. 
\end{definition}

We denote by $J^{!}_{k,L,t}$ (resp. $J^{w}_{k,L,t}$, $J_{k,L,t}$, 
$J_{k,L,t}^{\text{cusp}}$) the vector space of weakly holomorphic Jacobi 
forms (resp. weak, holomorphic or cusp Jacobi forms) 
of weight $k$ and index $t$ for $L$. 
The Jacobi forms due to Eichler--Zagier $J_{k,N}$
are identical to the Jacobi forms $J_{k,A_1, N}$ for the lattice  $A_1=\latt{\ZZ, 2x^2}$.
\smallskip

We next introduce the additive Jacobi lifting and Borcherds product. 
 
Let $\varphi \in J_{k,L,t}^{!}$.  For any positive integer $m$, one has
\begin{equation}
\varphi \lvert_{k,t}T_{-}(m)(\tau,
\mathfrak{z})=m^{-1}\sum_{\substack{ad=m,a>0\\ 0\leq b <d}}a^k \varphi
\left(\frac{a\tau+b}{d},a\mathfrak{z}\right) \in J_{k,L,mt}^{!},
\end{equation}
and the Fourier coefficients of 
$\varphi \lvert_{k,t}T_{-}(m)(\tau,\mathfrak{z})$ can be calculated by the formula
$$
f_m(n,\ell)=\sum_{\substack{a\in \NN\\ a \mid (n,\ell,m)}}a^{k-1} f
\left( \frac{nm}{a^2},\frac{\ell}{a}\right),
$$
where $a\mid(n,\ell,m)$ means that $a\mid (n,m)$ 
and $a^{-1}\ell\in L^\vee$. 

\begin{theorem}[see Theorem 3.1 in \cite{Gri94}]
Let $\varphi \in J_{k,L,1}$. Then the function 
$$ \Grit(\varphi)(Z)=f(0,0)G_k(\tau)+\sum_{m\geq 1}\varphi \lvert_{k,1}
T_{-}(m)(\tau,\mathfrak{z}) e^{2\pi i m\omega}
$$
is a modular form of weight $k$ for $\widetilde{\Orth}^+(2U\oplus L(-1))$. Moreover, this modular form is symmetric i.e.
$\Grit(\varphi)(\tau,\mathfrak{z}, \omega)=
\Grit(\varphi)(\omega,\mathfrak{z}, \tau)$.
\end{theorem}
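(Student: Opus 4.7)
The plan is to work directly on the tube domain $\cH(L)$ and verify both assertions---modularity under $\widetilde{\Orth}^+(2U\oplus L(-1))$ and the symmetry $\tau\leftrightarrow\omega$---from the explicit triple Fourier expansion of $\Grit(\varphi)$ at the chosen $1$-dimensional cusp.

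First I would compute the Fourier expansion. Combining the series definition of $\Grit(\varphi)$ with the formula recalled above for the Fourier coefficients of $\varphi|_{k,1}T_{-}(m)$, the coefficient of $q^n\zeta^\ell e^{2\pi i m\omega}$ in $\Grit(\varphi)$ equals
$$c(n,\ell,m)=\sum_{a\in\NN,\ a\mid (n,\ell,m)}a^{k-1}f\!\left(\frac{nm}{a^2},\frac{\ell}{a}\right)$$
for $m\geq 1$, while the $m=0$ row is supplied by $f(0,0)G_k(\tau)$; expanding $G_k(\tau)=-\tfrac{B_k}{2k}+\sum_{n\geq 1}\sigma_{k-1}(n)q^n$ shows that this matches $c(n,0,0)=\sigma_{k-1}(n)f(0,0)$. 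Hence $c(n,\ell,m)=c(m,\ell,n)$ on the nose, which delivers the symmetry $\Grit(\varphi)(\tau,\mathfrak{z},\omega)=\Grit(\varphi)(\omega,\mathfrak{z},\tau)$ as soon as absolute convergence on $\cH(L)$ is available; and this follows from the standard Hecke bound $|f(n,\ell)|\ll n^{k/2}$ that holds for coefficients of holomorphic Jacobi forms of positive index.

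Next I would upgrade this to full $\widetilde{\Orth}^+(2U\oplus L(-1))$-invariance by checking a convenient set of generators: (i) the Jacobi group $\Gamma^J(L)$ acting in the variables $(\tau,\mathfrak{z})$ and fixing $\omega$; (ii) the integer translations $\omega\mapsto\omega+1$, already built into the expansion; (iii) the involution $V$ swapping the two hyperbolic planes, equivalently $\tau\leftrightarrow\omega$. Invariance under (i) amounts to the assertion that $\varphi|_{k,1}T_{-}(m)\in J_{k,L,m}$ for each $m\geq 1$, which is classical Hecke theory and follows from the transformation law of $\varphi$ by splitting the sum over cosets of $\SL_2(\ZZ)$ in the set of integer $2\times 2$ matrices of determinant $m$. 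Invariance under (iii) is exactly the symmetry of $c(n,\ell,m)$ just established, and (ii) is immediate.

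The substantive obstacle is the last, group-theoretic, step: one must verify that $\Gamma^J(L)$ together with $V$ really generates the whole stable orthogonal group $\widetilde{\Orth}^+(2U\oplus L(-1))$. This is the place where the hypothesis that $M$ contains two hyperbolic planes is used in earnest, via the Eichler criterion: one describes $\widetilde{\Orth}^+$ in terms of Eichler transvections and then expresses each transvection as a product of Jacobi-group elements and their $V$-conjugates. A secondary but genuinely necessary bookkeeping point is to verify compatibility of the $m=0$ row with $f(0,0)G_k(\tau)$ under $\SL_2(\ZZ)$, which is what fixes the precise normalization of the Eisenstein series $G_k$ that has to appear.
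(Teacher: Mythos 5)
The paper itself offers no proof of this statement --- it is imported verbatim from \cite{Gri94} (see also \cite{CG13} for the lattice-index formulation) --- so the only meaningful comparison is with the classical argument, and your outline is essentially that argument. The pieces that you work out are correct: the triple Fourier expansion with $c(n,\ell,m)=\sum_{a\mid(n,\ell,m)}a^{k-1}f(nm/a^2,\ell/a)$, the check that the $m=0$ row $f(0,0)G_k(\tau)$ is exactly what the symmetric formula demands (here one also uses that $f(0,\ell)=0$ for $\ell\neq 0$ since $\varphi$ is holomorphic and $L$ is positive definite), and the reduction of invariance under the Jacobi group to the classical fact $\varphi\lvert_{k,1}T_{-}(m)\in J_{k,L,m}$. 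You also correctly isolate the generation statement as the place where real work happens.

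That generation statement, however, is where your proof has a genuine gap: as literally claimed, \emph{$\Gamma^J(L)$ together with $V$ does not generate $\widetilde{\Orth}^+(2U\oplus L(-1))$ in general}. Every generator you list --- the $\SL_2(\ZZ)$ part, the Heisenberg translations (which are Eichler transvections), and the swap $V$ of the two hyperbolic planes --- has determinant $+1$ as an isometry of $M=2U\oplus L(-1)$, so the group they generate lies inside $\widetilde{\SO}^+(M)$. But $\widetilde{\Orth}^+(M)$ can be strictly larger: for instance, if $L=A_1\oplus L'$ and $v$ is the norm-$2$ generator of $A_1$, the reflection $\sigma_v$ acts trivially on $D(M)$ and has determinant $-1$, hence lies in $\widetilde{\Orth}^+(M)\setminus\widetilde{\SO}^+(M)$. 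Your argument therefore only proves modularity under $\widetilde{\SO}^+(M)$. Two standard repairs: (a) invoke the fact (recalled in the paper after Theorem 2.3, from \cite[Lemma 2.1]{Gri94}) that $f(n,\ell)$ depends only on $2n-(\ell,\ell)$ and on $\ell+L\in D(L)$, which makes $\Grit(\varphi)$ manifestly invariant under every isometry of $L$ acting trivially on $D(L)$, and then account for the remaining index-two coset; or, more cleanly, (b) bypass generator counting entirely via Borcherds' reconstruction of the additive lift as a theta integral \cite[Theorem 14.3]{Bor98}: the Siegel theta kernel is $\Orth^+(M)$-equivariant and the input is a vector-valued form for $\rho_{D(M)}$, so invariance under the kernel of $\Orth^+(M)\to\Orth(D(M))$ --- that is, under all of $\widetilde{\Orth}^+(M)$ --- is automatic. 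Route (b) is exactly the mechanism the paper uses later (in the lemma showing $\Grit(\Theta_{L_4})$ is modular for the full group $\Orth^+(M_{2,6})$), so adopting it here would both close the gap and align your argument with the paper's point of view.
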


The following is a variant of Borcherds product in terms of Jacobi forms. The main advantage of this version is that it is rather easy to compute the Fourier expansion at the standard 1-dimensional cusp and the first Fourier-Jacobi coefficient is given by a theta block.

\begin{theorem}[see Theorem 4.2 in \cite{Gri18} for details]
\label{th:Borcherds}
We fix an ordering $\ell >0$ in $L^\vee$ in a way similar to positive root systems (see the bottom of page 825 in \cite{Gri18}). Let 
$$
\varphi(\tau,\mathfrak{z})=\sum_{n\in\ZZ, \ell\in L^\vee}f(n,\ell)q^n 
\zeta^\ell \in J^{!}_{0,L,1}.
$$
Assume that $f(n,\ell)\in \ZZ$ for all $2n-(\ell,\ell)\leq 0$. 
There is a meromorphic modular form of weight 
$f(0,0)/2$ and character $\chi$ with respect to  
$\widetilde{\Orth}^+(2U\oplus L(-1))$ defined as
$$ 
\Borch(\varphi)=
\biggl(\Theta_{f(0,*)}
(\tau,\mathfrak{z})\exp{(2\pi i\, C\omega)}\biggr)
\exp \left(-\Grit(\varphi)\right),
$$
where
$C=\frac{1}{2\rank(L)}\sum_{\ell\in L^\vee}f(0,\ell)(\ell,\ell)$ and
\begin{equation}\label{FJtheta}
\Theta_{f(0,*)}(\tau,\mathfrak{z})
=\eta(\tau)^{f(0,0)}\prod_{\ell >0}
\biggl(\frac{\vartheta(\tau,(\ell,\mathfrak{z}))}{\eta(\tau)} 
\biggr)^{f(0,\ell)}
\end{equation}
is a general theta block.
The character $\chi$ is induced by the character of the theta-block
and by the relation $\chi(V)=(-1)^D$, where
$V: (\tau,\mathfrak{z}, \omega) \to (\omega,\mathfrak{z},\tau)$, 
and $D=\sum_{n<0}\sigma_0(-n) f(n,0)$.

The poles and zeros of $\Borch(\varphi)$ lie on the rational quadratic 
divisors $\cD_v$, where $v\in 2U\oplus L^\vee(-1)$ is a primitive vector 
with $(v,v)<0$. The multiplicity of this divisor is given by 
$$ \mult \cD_v = \sum_{d\in \ZZ,d>0 } f(d^2n,d\ell),$$
where $n\in\ZZ$, $\ell\in L^\vee$ such that 
$(v,v)=2n-(\ell,\ell)$ and $v-(0,0,\ell,0,0)\in 2U\oplus L(-1)$.

The same function has the following infinite product expansion
$$
\Borch(\varphi)(Z)=q^A \zeta^{\vec{B}} \xi^C\prod_{\substack{n,m\in\ZZ, \ell
\in L^\vee\\ (n,\ell,m)>0}}(1-q^n \zeta^\ell \xi^m)^{f(nm,\ell)}, 
$$ 
where $Z= (\tau,\mathfrak{z}, \omega) \in \cH (L)$, 
$q=\exp(2\pi i \tau)$, 
$\zeta^\ell=\exp(2\pi i (\ell, \mathfrak{z}))$, $\xi=\exp(2\pi i \omega)$,
the notation $(n,\ell,m)>0$ means that either $m>0$, or $m=0$ 
and $n>0$, or $m=n=0$ and $\ell<0$, and
$$
A=\frac{1}{24}\sum_{\ell\in L^\vee}f(0,\ell),\quad
\vec{B}=\frac{1}{2}\sum_{\ell>0} f(0,\ell)\ell.
$$
\end{theorem}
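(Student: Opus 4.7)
The plan is to realize this statement as the Jacobi-form incarnation of Borcherds' singular theta correspondence. First I would use the classical equivalence, due to Eichler--Zagier for $L=A_1$ and extended to general positive-definite $L$ by Gritsenko, between weakly holomorphic Jacobi forms of index $1$ for $L$ and vector-valued modular forms of weight $-\rank(L)/2$ for the Weil representation of $L(-1)$ on $\Mp_2(\ZZ)$. Decomposing $\varphi$ along the theta components $\vartheta_{L,\gamma}(\tau,\mathfrak{z})=\sum_{\ell\in L+\gamma}q^{(\ell,\ell)/2}\zeta^\ell$, $\gamma\in L^\vee/L$, produces a vector-valued form $F_\varphi$. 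Feeding $F_\varphi$ into Borcherds' main theorem on automorphic products for the signature $(2,\rank(L)+2)$ lattice $M=2U\oplus L(-1)$ yields a meromorphic automorphic form on $\cD(M)$ of the claimed weight $f(0,0)/2$ for $\widetilde{\Orth}^+(M)$.

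Next I would expand this form at the standard $1$-dimensional cusp $\latt{e,e_1}$ in the tube coordinates $(\tau,\mathfrak{z},\omega)\in\cH(L)$. Borcherds' product formula then delivers the convergent infinite product over $(n,\ell,m)>0$ stated in the theorem; the normalization exponents $A$, $\vec B$, $C$ drop out of the Weyl-vector calculation for the chamber selected by the ordering $\ell>0$ on $L^\vee$. To match the factored form $\Borch(\varphi)=\bigl(\Theta_{f(0,*)}(\tau,\mathfrak{z})\exp(2\pi iC\omega)\bigr)\exp(-\Grit(\varphi))$, I would split the product along $m=0$ versus $m\geq 1$. For each fixed $\ell$ the $m=0$ factor collapses, via the Jacobi triple product, into $\vartheta(\tau,(\ell,\mathfrak{z}))^{f(0,\ell)}$, and multiplying across $\ell>0$ together with the $\eta$-factors coming from $f(0,0)$ reproduces the theta block~(\ref{FJtheta}). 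For the $m\geq 1$ factor I would take $-\log$, expand $\log(1-q^n\zeta^\ell\xi^m)$ as a power series, reorganize by the exponent of $\xi=e^{2\pi i\omega}$, and observe that the coefficient of $\xi^N$ is exactly the Fourier expansion of $\varphi|_{0,1}T_{-}(N)$ given by the formula just above Theorem~\ref{th:Borcherds}; summing over $N\geq 1$ recovers $-\Grit(\varphi)$.

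For the divisor I would invoke Borcherds' Heegner-divisor formula applied to $F_\varphi$: the zero/pole set is a union of rational quadratic divisors $\cD_v$ with $v\in 2U\oplus L^\vee(-1)$ primitive and $(v,v)<0$, and the multiplicity along $\cD_v$ equals the sum of principal-part Fourier coefficients of $F_\varphi$ along the ray through $v$. Re-expressing this sum in Jacobi coordinates gives the stated formula $\mult\cD_v=\sum_{d>0}f(d^2n,d\ell)$. The character $\chi$ is then forced: its restriction to the Jacobi group $\Gamma^J(L)$ has to agree with the multiplier system of $\Theta_{f(0,*)}$ read off from the first Fourier--Jacobi coefficient, and the sign $\chi(V)=(-1)^D$ on the swap $V:(\tau,\mathfrak{z},\omega)\mapsto(\omega,\mathfrak{z},\tau)$ arises from tracking how the $n\leftrightarrow m$ symmetry of the product absorbs the negative-index constant coefficients $f(n,0)$, $n<0$, each contributing $\sigma_0(-n)$ sign factors.

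The main technical obstacle is the dictionary between the Jacobi picture and Borcherds' vector-valued setup: one must check carefully that the principal part of $F_\varphi$ recovers the asserted multiplicity formula for every primitive $v$ (including contributions from several pairs $(n,\ell)$ lying on the same ray), and that the two separate Weyl-vector corrections, one coming from the $m=0$ theta-block exponent and one from the Heisenberg translation symmetry, assemble into the single triple $(A,\vec B,C)$ displayed at the end of the statement. The computation of $\chi(V)$ is an additional subtlety since the ordering $\ell>0$ used to define the product is not $V$-invariant, so one has to verify that the resulting rearrangement of the infinite product introduces precisely the sign $(-1)^D$.
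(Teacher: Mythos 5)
This theorem is quoted in the paper without proof (it is cited as Theorem 4.2 of \cite{Gri18}, going back to \cite{Bor98} and \cite{GN98}), so there is no in-paper argument to compare against; judged on its own, your outline reproduces the standard derivation used in those sources: pass from $\varphi$ to a vector-valued form of weight $-\rank(L)/2$ for the Weil representation, apply Borcherds' singular theta lift to $2U\oplus L(-1)$, expand at the $1$-dimensional cusp to get the product and the Weyl-vector data $(A,\vec B,C)$, collapse the $m=0$ factors into the theta block via the triple product, and identify $-\log$ of the $m\geq 1$ part with $\sum_{N\geq 1}\varphi|_{0,1}T_-(N)\,\xi^N$, with the divisor read off from the principal part. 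The only places your sketch is thinner than the cited proof are the verification that $\chi(V)=(-1)^D$ (which in \cite{GN98} is an explicit computation with the rearrangement of the product under $\tau\leftrightarrow\omega$, not just a bookkeeping remark) and the role of the Eisenstein term in $\Grit(\varphi)$ at weight $0$, but neither affects the correctness of the approach.
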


\begin{remark}\label{rem:divisor}
By the Eichler criterion (see \cite[Proposition 3.3]{GHS09}), if $v_1, v_2\in 2U\oplus L^\vee(-1)$ are 
primitive, have the same norm, and have the same image in the discriminant 
group, i.e. $v_1-v_2\in 2U\oplus L(-1)$, then there exists $g\in 
\widetilde{\Orth}^+(2U\oplus L(-1))$ such that $g(v_1)=v_2$.
Therefore, for a primitive vector $v\in 2U\oplus L^\vee(-1)$ with $
(v,v)<0$, there exists a vector $(0,n,\ell,1,0)\in  2U
\oplus L^\vee(-1)$ such that $(v,v)=2n-(\ell,\ell)$,
$v- (0,n,\ell,1,0)\in 2U\oplus L(-1)$ and  
$$
\widetilde{\Orth}^+(2U\oplus L(-1))\cdot 
\cD_v=\widetilde{\Orth}^+(2U\oplus L(-1))\cdot\cD_{(0,n,\ell,1,0)}.
$$
\end{remark}

\begin{remark}
By \cite[Lemma 2.1]{Gri94}, the Fourier coefficient $f(n,\ell)$ of $\varphi\in J_{k, L, 1}^{!}$ depends only on the (hyperbolic) norm $2n-(\ell,\ell)$ of its index
and the image of $\ell$ in the discriminant group of $L$. In other word, $f(n_1, \ell_1)=f(n_2, \ell_2)$ if  $2n_1-(\ell_1,\ell_1)=2n_2-(\ell_2,\ell_2)$ and if $\ell_1-\ell_2\in L$.

The divisor of the Borcherds product in Theorem \ref{th:Borcherds}
is determined by the so-called  {\it singular} Fourier coefficients 
$f(n,\ell)$ with $2n-(\ell,\ell)<0$.
There are only a finite number of orbits of such coefficients that are 
supported because the norm  $2n-(\ell,\ell)$ of  the indices of the nontrivial Fourier coefficients is bounded from below.  
\end{remark}

\begin{remark}
Paramodular forms of weight $k$ for $\Gamma_N$ can be regarded as modular forms of the same weight for $\widetilde{\Orth}^+(2U\oplus \latt{-2N})$ (see \cite{GN98}). Thus we can use the pullback of orthogonal modular forms to construct paramodular forms.
\end{remark}

At the end of this section, we recall the isomorphism between vector-valued modular forms and Jacobi forms. Let $D$ be a discriminant form. Let $\{\textbf{e}_\gamma: \gamma \in D\}$ be the basis of the group ring $\CC[D]$. The \textit{Weil representation} of $\Mp_2(\ZZ)$ (i.e. the double covering of $\SL_2(\ZZ)$) on $\CC[D]$ is a unitary representation defined by the action of the generators of $\Mp_2(\ZZ)$ as follows (see \cite{Bru02})
\begin{align}
\rho_D(T)\textbf{e}_\gamma &= \exp(-\pi i \gamma^2)\textbf{e}_\gamma,\\
\rho_D(S)\textbf{e}_\gamma &= \frac{\exp(\pi i \sign(D)/4)}{\sqrt{\abs{D}}} \sum_{\beta\in D}\exp(2\pi i(\gamma,\beta))\textbf{e}_\beta.
\end{align}
Let $f(\tau)=\sum_{\gamma\in D}f_\gamma(\tau)\textbf{e}_\gamma$ be a holomorphic function on $\HH$ with values in $\CC[D]$ and $k\in \frac{1}{2}\ZZ$. The function $f$ is called a nearly holomorphic modular form for $\rho_D$ of weight $k$ if 
$$f(A\tau)=\phi(\tau)^{2k}\rho_D(A)f(\tau), \quad \forall (A,\phi) \in \Mp_2(\ZZ)$$
and if $f$ is meromorphic at $i\infty$. If $f$ is also holomorphic at $i\infty$, then it is called a holomorphic modular form. If $f$ vanishes at $i\infty$, then it is called a cusp form. The modular form $f$ has a Fourier expansion of the form
\begin{equation}
f(\tau)=\sum_{\gamma\in D}\sum_{n\in \ZZ-\gamma^2/2}c_\gamma(n)e^{2\pi i n\tau}\textbf{e}_\gamma.
\end{equation}
Here, the sum 
$\sum_{\gamma\in D}\sum_{n<0}c_\gamma(n)e^{2\pi i n\tau}\textbf{e}_\gamma$
is called the principal part of $f$.

The orthogonal group $\Orth(D)$ acts on $\CC[D]$ via 
$
\sigma\left(\sum_{\gamma\in D}a_\gamma \textbf{e}_\gamma  \right)=\sum_{\gamma\in D}a_\gamma \textbf{e}_{\sigma(\gamma)}
$
and this action commutes with that of $\rho_D$ on $\CC[D]$. Thus $\Orth(D)$ acts well on modular forms for the Weil representation. 

Let $L$ be an even positive definite lattice with discriminant form $D(L)$. The theta series associated to $L$ is defined as
$$
\Theta_{\gamma}^{L}(\tau,\mathfrak{z})=\sum_{\ell \in \gamma +L}\exp\left(\pi i(\ell,\ell) \tau + 2\pi i(\ell,\mathfrak{z}) \right), \quad \gamma\in D(L).
$$
The map 
\begin{equation}
F(\tau)=\sum_{\gamma\in D(L)}F_\gamma(\tau)\textbf{e}_\gamma \longmapsto \sum_{\gamma\in D(L)}F_\gamma(\tau)\Theta_\gamma^L(\tau,\mathfrak{z})
\end{equation}
defines an isomorphism between the space of nearly holomorphic modular forms of weight $k$ for $\rho_{D(L)}$ and the space of weakly holomorphic Jacobi forms of weight $k+\rank(L)/2$ and index $1$ for $L$. The principal part of $F$ corresponds to the singular Fourier coefficients of the Jacobi form. Hence the map also induces an isomorphism between the subspaces of holomorphic modular (resp. cusp) forms for $\rho_{D(L)}$ and holomorphic (resp. cusp) Jacobi forms of index $1$ for $L$.

\section{Proof of Theorem \ref{th:wt2}}
The aim of this section is to prove Theorem \ref{th:wt2}. Following the strategy mentioned in the introduction, by seeking  Scheithauer's list of reflective modular forms in \cite{Sch06}, we found that there is a strongly reflective modular form of singular weight $2$ with the complete 4-reflective divisors, 10-reflective divisors and 20-reflective divisors on the lattice 
$M_{2,6}=U\oplus U(10)\oplus A_4(-1)$. We denote this modular form by $\Psi_2^{\Sch}$. Note that $\Psi_2^{\Sch}$ was constructed by Scheithauer as the Borcherds product of a vector-valued modular form which is a lifting of the eta-quotient $\eta^{-1}(\tau)\eta^{-2}(2\tau)\eta^{-3}(5\tau)\eta^{2}(10\tau)$. The maximal modular group of $\Psi_2^{\Sch}$ is the full modular group $\Orth^+(M_{2,6})$. The divisor of $\Psi_2^{\Sch}$ is as follows
\begin{equation}\label{eq:div2}
\div(\Psi_2^{\Sch})=\sum_{\substack{v\in M_{2,6}^\vee \; \text{primitive}\\ (v,v)=-1,\, \ord (v)= 2}} \cD_v +  \sum_{\substack{v\in M_{2,6}^\vee \; \text{primitive}\\ (v,v)=-\frac{2}{5},\, \ord (v)= 5}} \cD_v + \sum_{\substack{v\in M_{2,6}^\vee \; \text{primitive}\\ (v,v)=-\frac{1}{5},\, \ord (v)= 10}} \cD_v.
\end{equation}  
The lattice $M_{2,6}(-1)$ is of level $10$ and has genus $\II_{6,2}(2_{\II}^{+2}5^{+3})$. It is clear that the discriminant group of $M_{2,6}$ has 3 generators. By \cite{Nik80} (or \cite[Lemma 2.3]{Wan19}), there exists an even positive definite lattice $L$ of rank 4 such that $M_{2,6}\cong 2U\oplus L(-1)$. By \cite{LMFDB}, the genus $\II_{4,0}(2_{\II}^{+2}5^{+3})$ contains only one class and the label of this lattice is $4.500.10.1.2$. Thus the lattice $L$ is unique up to isomorphism and we denote it by $L_4$.  The  matrix model of  $L_4$ and its inverse are respectively 
\begin{align*}
&L_4=\left(\begin{array}{cccc}
4 & 2 & 2 & 2 \\ 
2 & 6 & 1 & 1 \\ 
2 & 1 & 6 & 1 \\ 
2 & 1 & 1 & 6
\end{array}  \right),& &L_4^{-1}=\frac{1}{10}\left(\begin{array}{cccc}
4 & -1 & -1 & -1 \\ 
-1 & 2 & 0 & 0 \\ 
-1 & 0 & 2 & 0 \\ 
-1 & 0 & 0 & 2
\end{array}  \right).&
\end{align*}
Let $\alpha_1, \alpha_2, \alpha_3, \alpha_4$ be a basis of $L_4$ corresponding to the above matrix and $w_1, w_2, w_3, w_4$ be the associated dual basis. The lattice $L_4$ is of level $10$ and has determinant $500$. We next consider $\Psi_2^{\Sch}$ as a reflective modular form on $2U\oplus L_4(-1)$. Then $\Psi_2^{\Sch}$ should be a Borcherds product of a Jacobi form $\Psi_{L_4} \in J_{0,L_4,1}^!$. From the divisor of $\Psi_2^{\Sch}$, we conclude that $\Psi_{L_4}$ is in fact a weak Jacobi form because $\Psi_2^{\Sch}$ has no 2-reflective divisor i.e. $\cD_r$ with $r\in M_{2,6}$ and $(r,r)=-2$. Then $\Psi_{L_4}$ has a Fourier expansion of the form
$$
\Psi_{L_4}(\tau,\mathfrak{z})=\sum_{n\in \NN, \ell\in L_{4}^\vee}f(n,\ell)q^n\zeta^\ell.
$$
By Theorem \ref{th:Borcherds}, $f(0,0)=4$, and for any $\ell\neq 0$, the $q^0$-term $f(0,\ell)\zeta^\ell$ determines a divisor $\cD_{(0,0,\ell,1,0)}$. But $\cD_{(0,0,\ell,1,0)}$ must be reflective. Therefore either $f(0,\ell)=0$, or $f(0,\ell)=1$ and $\ell$ satisfies one of the following conditions
\begin{itemize}
\item[(a)] $(\ell,\ell)=1$ and $\ell$ has order $2$ in $L_4^\vee/L_4$; 
\item[(b)] $(\ell,\ell)=\frac{2}{5}$ and $\ell$ has order $5$ in $L_4^\vee/L_4$; 
\item[(c)] $(\ell,\ell)=\frac{1}{5}$ and $\ell$ has order $10$ in $L_4^\vee/L_4$.
\end{itemize}
By direct calculations, up to sign
\begin{enumerate}
\item the vectors of type (a) are $2w_1+w_2+w_3+w_4$;
\item the vectors of type (b) are $w_2+w_3$, $w_2-w_3$, $w_2+w_4$, $w_2-w_4$, $w_3+w_4$, $w_3-w_4$;
\item the vectors of type (c) are $w_2$, $w_3$, $w_4$.
\end{enumerate}
These vectors determine completely the $q^0$-term of $\Psi_{L_4}$. By Theorem \ref{th:Borcherds}, the first Fourier-Jacobi coefficient of $\Psi_2^{\Sch}$ is known. In the coordinates $\mathfrak{z}=z_1\alpha_1+z_2\alpha_2+z_3\alpha_3+z_4\alpha_4$, this leading Fourier-Jacobi coefficient can be written as
\begin{align*}
\Theta_{L_4}(\tau,\mathfrak{z})=\eta^{-6}&\vartheta(2z_1+z_2+z_3+z_4)\vartheta(z_2-z_3)\vartheta(z_2+z_3)\vartheta(z_2-z_4)\\
&\vartheta(z_2+z_4)\vartheta(z_3-z_4)\vartheta(z_3+z_4)\vartheta(z_2)\vartheta(z_3)\vartheta(z_4),
\end{align*}
where $\vartheta(z)=\vartheta(\tau,z)$.
This function is a pure theta block of weight 2 with vanishing order one in $q$ and it defines a holomorphic Jacobi form of weight 2 and index 1 for $L_4$.

\begin{theorem}\label{th:wt2-4}
The Borcherds product $\Psi_2^{\Sch}=\Borch(\Psi_{L_4})$ is a Gritsenko lift. In other word, 
$$
\Psi_2^{\Sch}=\Borch(\Psi_{L_4})=\Grit(\Theta_{L_4}) \quad \text{and} \quad \Psi_{L_4}=-\frac{\Theta_{L_4}\lvert T_{-}(2)}{\Theta_{L_4}}.
$$
\end{theorem}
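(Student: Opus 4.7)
The plan is to show that the quotient $\Grit(\Theta_{L_4})/\Borch(\Psi_{L_4})$ is a nonzero constant by establishing the divisor inclusion $\div(\Grit(\Theta_{L_4})) \supseteq \div(\Borch(\Psi_{L_4}))$ and invoking Koecher's principle. Since Gritsenko's theorem only gives $\Grit(\Theta_{L_4})$ as a modular form for the stable subgroup $\widetilde{\Orth}^+(2U \oplus L_4(-1))$ while $\Borch(\Psi_{L_4}) = \Psi_2^{\Sch}$ is modular for the full group $\Orth^+(2U \oplus L_4(-1))$ with reflective divisor \eqref{eq:div2}, the key preliminary task is to upgrade the additive lift to full modularity.

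For the first step I pass from $\Theta_{L_4}$ to the vector-valued modular form $F$ of weight $0$ for the Weil representation $\rho_{D(L_4)}$ via the theta-isomorphism of Section 2, and invoke Borcherds' reconstruction of the additive lift in terms of the components $F_\gamma$. Since that reconstruction is $\Orth(D(L_4))$-equivariant, $\Grit(\Theta_{L_4})$ will be modular for $\Orth^+(2U \oplus L_4(-1))$ (possibly with a character) as soon as $F$ is invariant up to a character under $\Orth(D(L_4))$. Using the explicit inverse matrix $L_4^{-1}$ and the theta-block factorization of $\Theta_{L_4}$ into ten $\vartheta$-factors divided by $\eta^6$, the relevant action of $\Orth(D(L_4))$ is generated by a short list of permutations and sign changes of $w_2, w_3, w_4$ that visibly permute these ten $\vartheta$-factors, so the invariance (up to a sign character) reduces to a direct verification on generators.

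For the second step I observe that the ten $\vartheta$-factors correspond exactly to ten reflective hyperplanes: the single factor $\vartheta(2z_1+z_2+z_3+z_4)$ cuts out a $4$-reflective hyperplane, the six factors $\vartheta(z_i \pm z_j)$ with $\{i,j\} \subset \{2,3,4\}$ cut out $10$-reflective hyperplanes, and the three factors $\vartheta(z_i)$ for $i \in \{2,3,4\}$ cut out $20$-reflective hyperplanes. The Hecke-averaging formula for $T_{-}(m)$ shows that each $\Theta_{L_4}|T_{-}(m)$ vanishes on those ten hyperplanes whenever $\Theta_{L_4}$ does, and since the $q$-vanishing order of $\Theta_{L_4}$ is one the term $f(0,0)G_2(\tau)$ in the Gritsenko formula is absent. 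Hence $\Grit(\Theta_{L_4}) = \sum_{m \geq 1} \Theta_{L_4}|T_{-}(m)\, \xi^m$ vanishes on the corresponding ten reflective divisors in $\cH(L_4)$. Since the level of $2U \oplus L_4(-1)$ is the squarefree integer $10$, the full orthogonal group acts transitively on primitive reflective vectors of each fixed type (by the Eichler criterion of Remark \ref{rem:divisor} combined with $\Orth(D(L_4))$-transitivity on the corresponding elements of $D(L_4)$); combined with the first step, this propagates the vanishing of $\Grit(\Theta_{L_4})$ from these ten explicit hyperplanes to the entire reflective divisor \eqref{eq:div2}.

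The ratio $\Grit(\Theta_{L_4})/\Borch(\Psi_{L_4})$ is then a holomorphic modular form of weight $0$ for a finite-index subgroup of $\Orth^+(2U \oplus L_4(-1))$, hence a nonzero constant by Koecher's principle; comparing the first Fourier-Jacobi coefficients at the standard $1$-dimensional cusp (both equal to $\Theta_{L_4}$ by Theorem \ref{th:Borcherds} together with the computation $C=1$, and by the definition of the Gritsenko lift) pins down the constant. The relation $\Psi_{L_4} = -\Theta_{L_4}|T_{-}(2)/\Theta_{L_4}$ then follows by matching the second Fourier-Jacobi coefficients of both sides via the product expansion of Theorem \ref{th:Borcherds}. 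The hardest part of the plan is the $\Orth(D(L_4))$-invariance of $F$: because $L_4$ does not satisfy the $\norm_2$ condition of \cite{GW18}, the shortcut used there is unavailable, and this invariance must be obtained by a direct symmetry analysis of the theta-block expansion of $\Theta_{L_4}$.
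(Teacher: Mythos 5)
Your overall architecture coincides with the paper's: reduce to the divisor inclusion $\div(\Grit(\Theta_{L_4}))\supseteq\div(\Borch(\Psi_{L_4}))$ and K\"ocher's principle, prove vanishing of the additive lift on the ten explicit reflective hyperplanes cut out by the $\vartheta$-factors, upgrade $\Grit(\Theta_{L_4})$ to a modular form for the full group $\Orth^+(M_{2,6})$ via Borcherds' reconstruction from the vector-valued input, and use transitivity of $\Orth^+(M_{2,6})$ on primitive reflective vectors of fixed norm and order (possible because the level $10$ is squarefree) to propagate the vanishing to all of $\div(\Psi_2^{\Sch})$. Up to that point the proposal matches the paper.

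The gap is in your treatment of the $\Orth(D(L_4))$-invariance of the vector-valued input $F$, which you yourself identify as the hardest step. You propose to verify it on ``a short list of permutations and sign changes of $w_2,w_3,w_4$ that visibly permute the ten $\vartheta$-factors.'' Such automorphisms generate only the subgroup of $\Orth(D(L_4))$ stabilizing the set of discriminant classes appearing in the $q^0$-term of $\Psi_{L_4}$, and this subgroup is proper: $D(L_4)$ contains twenty classes of norm $\tfrac{2}{5}\pmod{2}$ and order $5$ and thirty-one classes of norm $\tfrac{1}{5}\pmod{2}$ and order $10$, whereas the theta block only sees twelve, respectively six, of them, and $\Orth(D(L_4))$ acts transitively on each full set (Scheithauer, squarefree level). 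Hence any generating set of $\Orth(D(L_4))$ necessarily contains elements moving some of the ten hyperplanes off your list, and the invariance of $F$ cannot be read off from the theta-block factorization; if it could, $L_4$ would effectively satisfy the $\norm_2$ condition and there would be no new difficulty beyond \cite{GW18}. The paper closes this step by a genuinely different argument: it computes $\dim M_0(\rho_{D(L_4)})=2$ using Ehlen--Skoruppa, exhibits a basis $F^{(1)},F^{(2)}$ coming from two embeddings of $L_4$ into $A_4^\vee(5)$, and observes that $F=F^{(1)}-F^{(2)}$ is, up to scalar, the unique element of this space supported on order-$10$ classes; since $\Orth(D(L_4))$ preserves the order of classes, $F$ is automatically an eigenvector, i.e.\ invariant up to a character, of order $2$ by integrality of the Fourier coefficients. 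You would need this dimension count and the identity $\Theta_{L_4}=\Theta_{L_4}^{(1)}-\Theta_{L_4}^{(2)}$ (or some equivalent global input) to make your first step go through; the remainder of your argument is then sound.
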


\begin{proof}
Firstly, if $\Borch(\Psi_{L_4})=\Grit(\Theta_{L_4})$ then we have $\Psi_{L_4}=-(\Theta_{L_4}\lvert T_{-}(2))/\Theta_{L_4}$  by the expressions of  Gritsenko lifts and Borcherds products. Thus we only need to show $\Psi_2^{\Sch}=\Grit(\Theta_{L_4})$. It suffices to prove $\div(\Grit(\Theta_{L_4}))\supset\div( \Psi_2^{\Sch})$ by K\"ocher's principle. According to \cite[Lemma 3.8]{GW18}, $\Grit(\Theta_{L_4})$ vanishes on the reflective divisors $\cD_{(0,0,\ell,1,0)}$ for all vectors $\ell$ of type (a), (b) and (c). But there are some other reflective divisors in $\div( \Psi_2^{\Sch})$. In fact, in the discriminant group of $L_4$, there are one class of norm $1$ ($\m 2$) and order 2, twenty classes of norm $\frac{2}{5}$ ($\m 2$) and order 5, and thirty-one classes of norm $\frac{1}{5}$ ($\m 2$) and order 10 (see \cite[\S 3]{Sch06}). Thus $L_4$ does not satisfy the condition $\norm_2$ and the argument in \cite{GW18} does not work in this case. Fortunately, the level of $L_4$ is squarefree and  $\Grit(\Theta_{L_4})$ vanishes on one part of reflective divisors. If we can prove that the maximal modular group of $\Grit(\Theta_{L_4})$ is $\Orth^+(M_{2,6})$, then we conclude from Lemma \ref{lem:orbit} below that $\Grit(\Theta_{L_4})$ vanishes on all reflective divisors, which implies $\div(\Grit(\Theta_{L_4}))\supset\div( \Psi_2^{\Sch})$. This assertion is proved in Lemma \ref{lem:full}.
\end{proof}

\begin{lemma}\label{lem:orbit}
Suppose that $M=2U\oplus L(-1)$ is of squarefree level. If $u$, $v$ are primitive vectors in $M^\vee$ satisfying $(u,u)=(v,v)$ and $\ord(u)=\ord(v)$, then there exists $g\in \Orth^+(M)$ such that $g(u)=v$.
\end{lemma}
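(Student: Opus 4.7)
The plan is to factor the required isometry as a composition of two: first a full orthogonal transformation carrying the discriminant class of $u$ to that of $v$, and then a stable orthogonal transformation matching the vectors themselves via the Eichler criterion.

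The first and main step is to construct $\phi \in \Orth(D(M))$ with $\phi(\bar{u}) = \bar{v}$, where $\bar{u}, \bar{v}$ denote the images of $u,v$ in $D(M)$. Since the level $N$ of $M$ is squarefree, the discriminant form splits orthogonally as $D(M) = \bigoplus_{p \mid N} D_p$ with each $D_p$ a non-degenerate $p$-elementary quadratic space (a vector space over $\mathbb{F}_p$, and even at $p=2$ in the sense that the quadratic form takes values in $\ZZ/2\ZZ$ there). Writing $\bar{u} = \sum_p \bar{u}_p$ and $\bar{v} = \sum_p \bar{v}_p$, the hypothesis $\ord(u) = \ord(v) = d$ forces $\bar{u}_p$ and $\bar{v}_p$ to be nonzero exactly for $p \mid d$, and the hypothesis $(u,u) = (v,v)$ together with the Chinese Remainder decomposition of the $\QQ/2\ZZ$-valued quadratic form forces $q(\bar{u}_p) = q(\bar{v}_p)$ in each $D_p$. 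Witt's theorem for the orthogonal group of a non-degenerate quadratic form over $\mathbb{F}_p$ then provides $\phi_p \in \Orth(D_p)$ with $\phi_p(\bar{u}_p) = \bar{v}_p$; I set $\phi_p = \id$ for $p \nmid d$. The direct sum $\phi = \bigoplus_p \phi_p$ lies in $\Orth(D(M))$ and maps $\bar{u}$ to $\bar{v}$.

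Next I would lift $\phi$ to an element $g_1 \in \Orth^+(M)$. Since $M = 2U \oplus L(-1)$ contains two hyperbolic planes, Nikulin's surjectivity theorem for the natural map $\Orth^+(M) \to \Orth(D(M))$ supplies such a lift. The primitive vectors $g_1(u)$ and $v$ now have the same norm in $M^\vee$ and the same image in $D(M)$, so the Eichler criterion quoted in Remark \ref{rem:divisor} produces $g_2 \in \widetilde{\Orth}^+(M)$ with $g_2(g_1(u)) = v$. Setting $g = g_2 g_1$ completes the argument.

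The main obstacle lies in the first step, namely transitivity of $\Orth(D(M))$ on elements of fixed order and fixed norm. The squarefree hypothesis is essential here: without it the $p$-parts $D_p$ would no longer be $\mathbb{F}_p$-vector spaces and Witt's theorem would not apply directly. At $p = 2$ a brief case analysis of the two elementary even $2$-adic pieces $u_2, v_2$ is needed to confirm the Witt-type statement in characteristic $2$; once this is in hand, the rest of the argument is the standard combination of Nikulin's lifting and the Eichler criterion.
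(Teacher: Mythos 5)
Your proof follows essentially the same route as the paper's: transitivity of $\Orth(D(M))$ on classes of fixed order and norm, a lift to $\Orth^+(M)$ via Nikulin's surjectivity theorem, and the Eichler criterion to finish. The only difference is that where you establish the first step directly from the $p$-part decomposition and Witt's theorem, the paper simply cites Scheithauer (Proposition 5.1 of \cite{Sch15}); both arguments also leave to a one-line check the point that Nikulin's theorem is stated for $\Orth(M)\to\Orth(D(M))$ and the lift must be adjusted to land in $\Orth^+(M)$.
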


\begin{proof}
Firstly, we view $u$ and $v$ as classes in the discriminant group $D(M)$. Since the level of $M$ is squarefree, by Proposition 5.1 and the paragraph after Proposition 5.2 in \cite{Sch15}, there exists $\hat{\sigma}\in \Orth(D(M))$ such that $\hat{\sigma}(u)=v$ in $D(M)$. According to \cite[Theorem 1.14.2]{Nik80}, the natural homomorphism $\mathfrak{J}: \Orth(M)\to \Orth(D(M))$ is surjective. It is easy to check $\mathfrak{J}(\Orth(M))=\mathfrak{J}(\Orth^+(M))$ in our case. Thus there is an automorphism $\sigma\in \Orth^+(M)$ such that its image under $\mathfrak{J}$ is $\hat{\sigma}$. Then we have $\sigma(u)-v\in M$. By the Eichler criterion (see Remark \ref{rem:divisor}), there exists $h\in \widetilde{\Orth}^+(M)$ such that $h(\sigma(u))=v$. Hence the automorphism $h\circ \sigma$ is the desired $g$.
\end{proof}

\begin{lemma}
The space $J_{2,L_4,1}$ has dimension $2$ and it has the following basis
\begin{align*}
\Theta_{L_4}^{(1)}(\tau,\mathfrak{z})=\eta^{-6}&\vartheta(z_1+z_2+z_3+z_4)\vartheta(z_2-z_3)\vartheta(z_2+z_3)\vartheta(z_2-z_4)\vartheta(z_2+z_4)\\
&\vartheta(z_3-z_4)\vartheta(z_3+z_4)\vartheta(z_1+z_2)\vartheta(z_1+z_3)\vartheta(z_1+z_4),\\
\Theta_{L_4}^{(2)}(\tau,\mathfrak{z})=\eta^{-6}&\vartheta(z_1)\vartheta(z_2-z_3)\vartheta(z_2+z_3)\vartheta(z_2-z_4)\vartheta(z_2+z_4)\vartheta(z_3-z_4)\\
&\vartheta(z_3+z_4)\vartheta(z_1+z_2+z_3)\vartheta(z_1+z_2+z_4)\vartheta(z_1+z_3+z_4).
\end{align*}
Correspondingly, the space $M_0(\rho_{D(L_4)})$ of modular forms of weight $0$ for $\rho_{D(L_4)}$ has dimension $2$. The modular forms corresponding to the two Jacobi forms form a basis of this space.  
\end{lemma}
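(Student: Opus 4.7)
The plan is to exploit the theta isomorphism of Section 2. Since $2 = 0 + \tfrac{1}{2}\rank(L_4)$, that isomorphism identifies $J_{2,L_4,1}$ with $M_0(\rho_{D(L_4)})$, and because a holomorphic modular form of weight $0$ on $\HH$ is constant in $\tau$, the space $M_0(\rho_{D(L_4)})$ coincides with the subspace of $\CC[D(L_4)]$ fixed by $\rho_{D(L_4)}$. Hence both dimension claims are equivalent, both spaces are finite-dimensional, and the basis correspondence at the end of the statement is automatic from the isomorphism.

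First I would verify that $\Theta_{L_4}^{(1)}$ and $\Theta_{L_4}^{(2)}$ actually lie in $J_{2,L_4,1}$. The weight follows from $10\cdot\tfrac{1}{2} - 6\cdot\tfrac{1}{2} = 2$. For the index-$1$ condition with respect to $L_4$, for each block I would write the ten arguments of the $\vartheta$-factors as linear forms $\ell_1,\dots,\ell_{10}\in L_4^{\vee}$ in the coordinates $\mathfrak{z} = z_1\alpha_1+\cdots+z_4\alpha_4$ and check the quadratic-form identity $\sum_i (\ell_i,\mathfrak{z})^2 = (\mathfrak{z},\mathfrak{z})_{L_4}$; this is a $4\times 4$ symmetric-matrix equality against the Gram matrix of $L_4$. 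Holomorphy as a Jacobi form is then verified by direct inspection of the Fourier support: the overall $q$-order at infinity equals $-\tfrac{6}{24} + \tfrac{10}{8} = 1$, and one checks that no Fourier coefficient with $2n-(\ell,\ell) < 0$ occurs.

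Linear independence of the two blocks is immediate from their zero sets: $\Theta_{L_4}^{(1)}$ vanishes along the hyperplane $\{z_1+z_2+z_3+z_4 = 0\}$ by virtue of its factor $\vartheta(z_1+z_2+z_3+z_4)$, whereas $\Theta_{L_4}^{(2)}$ does not vanish there; hence they are not proportional.

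The main obstacle is the upper bound $\dim J_{2,L_4,1} \le 2$, which via the theta isomorphism becomes
\[
\dim\bigl(\CC[D(L_4)]\bigr)^{\rho_{D(L_4)}} = 2.
\]
Since $D(L_4)$ has squarefree level $10$ and genus symbol $\II_{4,0}(2_{\II}^{+2} 5^{+3})$, it splits as the direct sum of its $2$-primary and $5$-primary parts, and $\rho_{D(L_4)}$ factors as the tensor product of the corresponding Weil representations. I would then compute the dimension of invariants by evaluating the traces of $\rho_{D(L_4)}(T)$ and $\rho_{D(L_4)}(S)$ on $\CC[D(L_4)]$ through standard Gauss-sum formulas (cf.~\cite{Sch15}), or equivalently through the Bruinier--Kuss dimension formula, which simplifies considerably because the level is squarefree. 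This final numerical check that the invariant space has dimension exactly $2$ is the only computationally delicate step in the proof; once it is done, the two explicit, linearly independent theta blocks exhibited in the lemma are forced to be a basis of $J_{2,L_4,1}$, and their images under the theta isomorphism give a basis of $M_0(\rho_{D(L_4)})$.
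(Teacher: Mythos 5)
Your dimension count is essentially the paper's: both arguments reduce $\dim J_{2,L_4,1}$ to the dimension of the space of invariants of the Weil representation of $D(L_4)$ and exploit the splitting into the $2$-part and the $5$-part; the paper writes $2U\oplus L_4\cong U\oplus U(2)\oplus A_4^\vee(5)$ and reads the two factors ($2$ and $1$) off the tables of \cite{ES17}, rather than recomputing traces or Gauss sums, but this is the same idea. Where you genuinely diverge is in exhibiting the basis. You propose a direct verification that the two displayed theta blocks lie in $J_{2,L_4,1}$ -- the index check via $\sum_i(\ell_i,\mathfrak{z})^2=(\mathfrak{z},\mathfrak{z})_{L_4}$ does hold for both blocks, and linear independence via the zero divisor $\{z_1+z_2+z_3+z_4=0\}$ works -- whereas the paper obtains $\Theta_{L_4}^{(1)}$ and $\Theta_{L_4}^{(2)}$ as pullbacks of the known holomorphic Jacobi form $\Theta_{A_4}\in J_{2,A_4^\vee(5),1}$ of \cite{GW18} along two embeddings of $L_4$ into $A_4^\vee(5)$. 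The pullback route buys two things yours does not. First, holomorphy is inherited from $\Theta_{A_4}$ for free; in your argument the step ``one checks that no Fourier coefficient with $2n-(\ell,\ell)<0$ occurs'' is the real content and is not implied by the $q$-order being $1$ (one must control the $\zeta$-support of the $q^1$-coefficient, i.e.\ the norms of the vectors $\frac{1}{2}\sum_i\epsilon_i\ell_i$ after cancellation), so you should either carry out this finite computation or invoke the holomorphy criterion for theta blocks from \cite{GSZ19}. Second, the pullback description is not a mere convenience: it is used again in Corollary \ref{cor:wt2-4} to know that $\Grit(\Theta_{L_4}^{(1)})$ and $\Grit(\Theta_{L_4}^{(2)})$ are Borcherds products via the main theorem of \cite{GW18}. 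For the lemma as stated, your argument is sound once the holomorphy check is actually performed.
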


\begin{proof}
We can write $M_{2,6}(-1)=2U\oplus L_4 =U\oplus U(2)\oplus A_4^\vee(5)$. By \cite[Remark 3.2]{ES17}, we have $\dim M_0(\rho_{D(L_4)})=\dim M_0(\rho_{D(U(2))})\times\dim M_0(\rho_{D(A_{4}^\vee(5))})$, where the latter two spaces correspond to the $2$-part and $5$-part defined in \cite{ES17}. Notice that our Weil representation for $D(L)$ is equal to the Weil representation for $D(L(-1))$ used in \cite{ES17}. By \cite[Table 1 and Table 5]{ES17}, we have $\dim M_0(\rho_{D(U(2))})=2$ and $\dim M_0(\rho_{D(A_{4}^\vee(5))})=1$. Thus $\dim M_0(\rho_{D(L_4)})=2$. We next construct the basis using the pullback of Jacobi forms. By \cite[(3.11)]{GW18}, there is a Jacobi form $\Theta_{A_4}\in J_{2,A_4^\vee(5),1}$. There are two embeddings from $L_4$ into $A_4^\vee(5)$.  Firstly, the four vectors $(\alpha_2+\alpha_3+\alpha_4-\alpha_1)/2$, $\alpha_2$, $(\alpha_2+\alpha_3-\alpha_4+\alpha_1)/2$, $\alpha_1$ form a basis of $A_4^\vee(5)$ and the corresponding pullback of $\Theta_{A_4}$ gives $\Theta_{L_4}^{(1)}$. Secondly, the four vectors $(\alpha_2-\alpha_3+\alpha_4)/2$, $\alpha_2$, $(\alpha_2+\alpha_3+\alpha_4)/2$, $\alpha_1$  form another basis of $A_4^\vee(5)$ and the corresponding pullback of $\Theta_{A_4}$ gives $\Theta_{L_4}^{(2)}$.
\end{proof}

\begin{remark}\label{rem:anotherconstruction}
We can also construct the above basis using vector-valued modular forms. Let us write $M_{2,6}(-1)=U\oplus U_1(2)\oplus A_4^\vee(5)< U\oplus U_1 \oplus A_4^\vee(5)$. The unique modular form of weight $0$ for $\rho_{D(A_4^\vee(5))}$ corresponds to the Jacobi form $\Theta_{A_4}$ and we denote this modular form by  
$$
F(\tau)=\sum_{\gamma\in D(A_4^\vee(5))}a_\gamma\textbf{e}_\gamma,
$$
where $a_\gamma \in \ZZ$. Let $e$, $f$ be a basis of $U_1(2)$, i.e. $(e,e)=(f,f)=0$ and $(e,f)=2$. Then the corresponding dual basis of $U_1(2)$ is $\{\frac{1}{2} f, \frac{1}{2}e\}$.  We can choose the basis of $U_1$ as $\{ \frac{1}{2} e,f\}$ or $\{ e,  \frac{1}{2}f\}$.   We use the lifting $\big\uparrow_{U_1\oplus A_4^\vee(5)}^{U_1(2)\oplus A_4^\vee(5)}$ of  \cite[Corollary 2.2]{Ma18b} (or see \cite[Theorem 5.3]{Bor98},  \cite[Lemmas 5.6, 5.7]{Bru02}) to construct modular forms for $\rho_{D(L_4)}=\rho_{D(U_1(2)\oplus A_4^\vee(5))}$. Under the notations of \cite{Ma18b},  we can construct two such modular forms of weight $0$ because there are two choices of the basis of $U_1\oplus A_4^\vee(5)$. The two modular forms are constructed as
\begin{align*}
F^{(1)}(\tau)&=\sum_{\gamma\in D(A_4^\vee(5))}a_\gamma (\textbf{e}_\gamma+ \textbf{e}_{\frac{1}{2} e+\gamma}), \quad \text{where the basis of $U_1$ is chosen as $\{\frac{1}{2} e,f\}$},\\
F^{(2)}(\tau)&=\sum_{\gamma\in D(A_4^\vee(5))}a_\gamma (\textbf{e}_\gamma+ \textbf{e}_{\frac{1}{2}f+\gamma}), \quad \text{where the basis of $U_1$ is chosen as $\{e, \frac{1}{2}f\}$}.
\end{align*}

\end{remark}

\begin{lemma}\label{lem:fullJ}
The vector-valued modular form corresponding to $\Theta_{L_4}$ is invariant under the orthogonal group $\Orth(D(L_4))$ up to a character of order $2$ and we have $\Theta_{L_4}=\Theta_{L_4}^{(1)}-\Theta_{L_4}^{(2)}$.
\end{lemma}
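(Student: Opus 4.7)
The plan splits into two parts, both leveraging the $2$-dimensionality of $J_{2,L_4,1}$ and the explicit basis $\{\Theta_{L_4}^{(1)},\Theta_{L_4}^{(2)}\}$ just established.

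\emph{Part 1: proving $\Theta_{L_4}=\Theta_{L_4}^{(1)}-\Theta_{L_4}^{(2)}$.} Write $\Theta_{L_4}=c_1\Theta_{L_4}^{(1)}+c_2\Theta_{L_4}^{(2)}$ for unique $c_1,c_2\in\CC$. All three theta blocks share the common factor $\eta^{-6}\vartheta(z_2-z_3)\vartheta(z_2+z_3)\vartheta(z_2-z_4)\vartheta(z_2+z_4)\vartheta(z_3-z_4)\vartheta(z_3+z_4)$, so after dividing this out the proposed identity collapses to the four-term theta relation
\begin{align*}
\vartheta(2z_1&+z_2+z_3+z_4)\vartheta(z_2)\vartheta(z_3)\vartheta(z_4) \\
&=\vartheta(z_1+z_2+z_3+z_4)\vartheta(z_1+z_2)\vartheta(z_1+z_3)\vartheta(z_1+z_4) \\
&\quad -\vartheta(z_1)\vartheta(z_1+z_2+z_3)\vartheta(z_1+z_2+z_4)\vartheta(z_1+z_3+z_4),
\end{align*}
a classical Riemann-type identity for the odd Jacobi theta function. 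Equivalently, since the ambient space has dimension two, it suffices to compare the lowest non-vanishing Fourier coefficient of each side to pin down $(c_1,c_2)=(1,-1)$.

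\emph{Part 2: invariance of the associated vector-valued form up to an order-$2$ character.} Because $|D(U_1(2))|=4$ and $|D(A_4^\vee(5))|=125$ are coprime, the orthogonal decomposition $D(L_4)=D(U_1(2))\oplus D(A_4^\vee(5))$ induces a product factorization $\Orth(D(L_4))=\Orth(D(U_1(2)))\times\Orth(D(A_4^\vee(5)))$. A direct inspection of the three non-trivial cosets of $D(U_1(2))$ (of discriminant norms $0$, $0$, $1$ modulo $2$) shows that $\Orth(D(U_1(2)))$ is cyclic of order $2$, generated by the involution $\sigma$ that swaps the isotropic cosets $\tfrac{1}{2}e\leftrightarrow\tfrac{1}{2}f$ and fixes $\tfrac{1}{2}(e+f)$. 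Applying $\sigma$ (extended by the identity on the $5$-part) to the formulas for $F^{(1)}$ and $F^{(2)}$ in Remark~\ref{rem:anotherconstruction} visibly interchanges them, so $\sigma$ acts on $F^{(1)}-F^{(2)}$ by $-1$. On the other hand, any element of $\Orth(D(A_4^\vee(5)))$ acts on the one-dimensional space $M_0(\rho_{D(A_4^\vee(5))})$ by a scalar, and that scalar propagates through the lifting operator to act on both $F^{(1)}$ and $F^{(2)}$, hence on $F^{(1)}-F^{(2)}$, by the same scalar. Combining the two contributions yields a character $\chi$ of $\Orth(D(L_4))$ with $\chi(\sigma)=-1$ under which the vector-valued form of $\Theta_{L_4}=\Theta_{L_4}^{(1)}-\Theta_{L_4}^{(2)}$ transforms.

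The main obstacle I anticipate is verifying that the character of $\Orth(D(A_4^\vee(5)))$ on $M_0(\rho_{D(A_4^\vee(5))})$ takes values in $\{\pm 1\}$, which is necessary for the total character $\chi$ to have order exactly $2$. This is most cleanly handled by identifying the generator with (a scalar multiple of) the theta series of $A_4^\vee(5)$ and exploiting its manifest $S_5$-Weyl invariance, the remaining automorphisms of $D(A_4^\vee(5))$ contributing only signs. Once this bookkeeping is complete, Parts 1 and 2 combine to give both assertions of the lemma.
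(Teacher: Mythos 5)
Your proposal is correct, and the two halves compare differently with the paper. For the identity $\Theta_{L_4}=\Theta_{L_4}^{(1)}-\Theta_{L_4}^{(2)}$ you do essentially what the paper does: the paper pins down the coefficients by comparing first Fourier coefficients inside the $2$-dimensional space $J_{2,L_4,1}$, and then observes in a follow-up remark that the identity is exactly the quoted Riemann relation \eqref{eq:RTR} (your four-term identity is precisely that relation with $(y_1,y_2,y_3,y_4)=(2z_1+z_2+z_3+z_4,z_2,z_3,z_4)$, after cancelling the six common theta factors). For the $\Orth(D(L_4))$-invariance, however, your route is genuinely different. The paper argues structurally: $F^{(1)}$ and $F^{(2)}$ each have nonzero components on classes of order $5$ and of order $10$, but $F^{(1)}-F^{(2)}$ is supported only on order-$10$ classes; since $\Orth(D(L_4))$ preserves the order of a class and the ambient space is $2$-dimensional, the line through $F^{(1)}-F^{(2)}$ is the unique $\Orth(D(L_4))$-stable line with that support, so the group acts on it by a character, which has order $2$ because the Fourier coefficients are integral (a rational root of unity is $\pm1$). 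You instead decompose $\Orth(D(L_4))=\Orth(D(U(2)))\times\Orth(D(A_4^\vee(5)))$ via the coprime $p$-parts, identify $\Orth(D(U(2)))\cong\ZZ/2$ generated by the swap $\tfrac12 e\leftrightarrow\tfrac12 f$, and check directly from the formulas in Remark \ref{rem:anotherconstruction} that this swap interchanges $F^{(1)}$ and $F^{(2)}$ while the $5$-part acts by a common scalar; this is more explicit and actually exhibits the nontrivial element of the character, which the paper's argument does not. The one soft spot is your proposed treatment of the $5$-part scalar via Weyl-group bookkeeping: the generator of $M_0(\rho_{D(A_4^\vee(5))})$ is a weight-$0$ vector-valued form (corresponding to the theta block $\Theta_{A_4}$), not the theta series of $A_4^\vee(5)$, and $\Orth(D(A_4^\vee(5)))$ is larger than the image of the Weyl group, so that route would need more care; the paper's integrality argument (the scalar is a rational root of unity, hence $\pm1$) disposes of this in one line and you should simply substitute it.
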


\begin{proof}
We see from the above remark that there are both nonzero Fourier coefficients of order $5$ and those of order $10$ in $F^{(1)}$ and $F^{(2)}$. But their difference $F^{(1)}-F^{(2)}$ has only nonzero Fourier coefficients of order $10$. Since the dimension of the corresponding space is $2$, we conclude that $F^{(1)}-F^{(2)}$ is invariant under $\Orth(D(L_4))$ up to a character. Moreover, the character has order $2$ because the modular form has integral Fourier coefficients. We prove the identity in the lemma by comparing their first Fourier coefficients.
\end{proof}

\begin{remark}
The identity $\Theta_{L_4}=\Theta_{L_4}^{(1)}-\Theta_{L_4}^{(2)}$ is in fact a direct consequence of the following variant of the Riemann theta relation (see \cite[Page 20, ($R_5$)]{Mum83})
\begin{equation}\label{eq:RTR}
\prod_{j=1}^4 \vartheta(\tau,z_j)+\prod_{j=1}^4 \vartheta(\tau,m_j)=\prod_{j=1}^4 \vartheta(\tau,p_j),
\end{equation}
where $(z_1,z_2,z_3,z_4)\in\CC^4$,  $(m_1,m_2,m_3,m_4)=(z_1,z_2,z_3,-z_4)A$, $(p_1,p_2,p_3,p_4)=(z_1,z_2,z_3,z_4)A$, and
$$
A=\frac{1}{2} \left( \begin{array}{cccc}
1 & 1 & 1 & 1 \\ 
1 & 1 & -1 & -1 \\ 
1 & -1 & 1 & -1 \\ 
1 & -1 & -1 & 1
\end{array}  
\right).
$$
A special case of the above identity can be found in \cite[Proposition 4.3]{BPY16}. We remark that the identity \eqref{eq:RTR} can also be viewed as a relation between Jacobi forms for the root lattice $D_4$ (see \cite[Example 2.8]{CG13}).
\end{remark}

\begin{lemma}\label{lem:full}
The function $\Grit(\Theta_{L_4})$ is a modular form of weight $2$ for $\Orth^+(M_{2,6})$ with a character of order $2$.
\end{lemma}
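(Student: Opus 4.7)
The plan is to combine Gritsenko's lifting theorem with the $\Orth(D(L_4))$-invariance (up to a character) of the vector-valued modular form attached to $\Theta_{L_4}$ established in Lemma \ref{lem:fullJ}. First, by Gritsenko's theorem recalled in Section 2, $\Grit(\Theta_{L_4})$ is already a holomorphic modular form of weight $2$ for the stable orthogonal group $\widetilde{\Orth}^+(M_{2,6})$. Since $M_{2,6}$ has squarefree level, the natural map $\mathfrak{J} : \Orth^+(M_{2,6}) \to \Orth(D(M_{2,6}))$ is surjective (as was already used in the proof of Lemma \ref{lem:orbit}), and its kernel is precisely $\widetilde{\Orth}^+(M_{2,6})$. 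Hence it suffices to show that for every $\sigma \in \Orth^+(M_{2,6})$, the pullback $\sigma^* \Grit(\Theta_{L_4})$ differs from $\Grit(\Theta_{L_4})$ only by a sign depending on $\mathfrak{J}(\sigma)$.

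Next I would invoke the reconstruction of the Gritsenko additive lift from its input vector-valued modular form. Under the isomorphism recalled at the end of Section 2, $\Theta_{L_4}$ corresponds to some $F \in M_0(\rho_{D(L_4)})$, and the Gritsenko lift can be expressed as a standard $\Orth(D(L_4))$-equivariant operator applied to $F$: the permutation action of $\hat{\sigma} \in \Orth(D(L_4))$ on the basis $\{\textbf{e}_\gamma\}$ of $\CC[D(L_4)]$ matches the geometric action, on the scalar-valued output, of any lift $\sigma \in \Orth^+(M_{2,6})$ of $\hat{\sigma}$. This is the content of Borcherds' reconstruction of additive lifts mentioned in the introduction.

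By Lemma \ref{lem:fullJ}, $F$ is invariant under $\Orth(D(L_4))$ up to a character $\chi$ of order $2$. Combining this with the equivariance above yields $\sigma^* \Grit(\Theta_{L_4}) = \chi(\mathfrak{J}(\sigma)) \cdot \Grit(\Theta_{L_4})$ for every $\sigma \in \Orth^+(M_{2,6})$. Together with the known $\widetilde{\Orth}^+(M_{2,6})$-modularity, this shows that $\Grit(\Theta_{L_4})$ is a modular form of weight $2$ for the full group $\Orth^+(M_{2,6})$ with a character of order $2$.

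The main obstacle is making the equivariance of the additive lift under $\Orth(D(L_4))$ precise enough to cite. This compatibility is essentially built into the theta decomposition: the kernel $\sum_{\gamma \in D(L_4)} \Theta^{L_4}_\gamma(\tau,\mathfrak{z})\, \textbf{e}_\gamma$ is tautologically $\Orth(D(L_4))$-equivariant, and the Gritsenko lift is produced from $F$ by pairing against this kernel and applying index-raising Hecke operators $T_{-}(m)$, which commute with the $\Orth(D(L_4))$-action on the coefficient ring $\CC[D(L_4)]$. Verifying this step explicitly — so that an automorphism acting on the $\textbf{e}_\gamma$'s really does correspond to the geometric pullback by any lift in $\Orth^+(M_{2,6})$ — together with the surjectivity of $\mathfrak{J}$, is what finishes the argument.
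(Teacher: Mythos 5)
Your proposal is correct and follows essentially the same route as the paper: both rest on Borcherds' reconstruction of the additive lift from the vector-valued input (\cite[Theorem 14.3]{Bor98}), the $\Orth(D(L_4))$-equivariance of the theta kernel, and Lemma \ref{lem:fullJ}. The only cosmetic difference is that the paper justifies the equivariance directly via the Siegel theta function of the signature $(2,6)$ lattice in the integral representation of the lift (which sees the full group $\Orth^+(M_{2,6})$ at once), rather than via the positive-definite theta decomposition and the operators $T_{-}(m)$ as in your closing paragraph.
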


\begin{proof}
In \cite[Theorem 14.3]{Bor98}, Borcherds reconstructed the Gritsenko lift in the context of modular forms for the Weil representation. In the Borcherds theorem, the Gritsenko lift is constructed as the integral of the inner product of a vector-valued modular form (i.e. the input) with the Siegel theta function over the fundamental domain. Notice that the Siegel theta function is invariant under the orthogonal group of the lattice. Thus, if the input is invariant under the orthogonal group of the discriminant form, then the corresponding Gritsenko lift is a modular form for the full modular group. We then finish the proof by Lemma \ref{lem:fullJ}.
\end{proof}

\begin{corollary}\label{cor:wt2-4}
We have the following equality
$$
\Borch\left(-\frac{\Theta_{L_4}|T_{-}(2)}{\Theta_{L_4}}  \right)= \Borch\left(-\frac{\Theta_{L_4}^{(1)}|T_{-}(2)}{\Theta_{L_4}^{(1)}}  \right) - \Borch\left(-\frac{\Theta_{L_4}^{(2)}|T_{-}(2)}{\Theta_{L_4}^{(2)}}  \right).
$$
\end{corollary}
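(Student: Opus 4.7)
The plan is to combine Theorem~\ref{th:wt2-4} with the decomposition $\Theta_{L_4}=\Theta_{L_4}^{(1)}-\Theta_{L_4}^{(2)}$ established in Lemma~\ref{lem:fullJ} and the previously proved $A_4$ case of the theta block conjecture from \cite{GW17, GW18}. The key observation is that all three Borcherds products appearing in the corollary are simultaneously Gritsenko lifts, and the Gritsenko lift is linear in its Jacobi input; the stated identity then follows directly from the linear decomposition of $\Theta_{L_4}$.

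More precisely, by Theorem~\ref{th:wt2-4} the left hand side equals $\Grit(\Theta_{L_4})$. Linearity of the Gritsenko lift is evident from the formula $\Grit(\varphi)(Z)=f(0,0)G_k(\tau)+\sum_{m\geq 1}\varphi\lvert_{k,1}T_{-}(m)(\tau,\mathfrak{z})\,e^{2\pi im\omega}$, and the Eisenstein term vanishes for all three of our Jacobi forms since each has vanishing order one in $q$. Hence Lemma~\ref{lem:fullJ} gives
$$
\Grit(\Theta_{L_4})=\Grit(\Theta_{L_4}^{(1)})-\Grit(\Theta_{L_4}^{(2)}).
$$
Next, each $\Theta_{L_4}^{(i)}$ is, by the construction in the lemma preceding Remark~\ref{rem:anotherconstruction}, the pullback of $\Theta_{A_4}\in J_{2,A_4^\vee(5),1}$ along an embedding $\iota_i\colon L_4\hookrightarrow A_4^\vee(5)$, which extends to an embedding of orthogonal lattices $2U\oplus L_4(-1)\hookrightarrow 2U\oplus A_4^\vee(5)(-1)$. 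Since both $\Grit$ and $\Borch$ are natural under such pullback (the Fourier--Jacobi expansion of a Gritsenko lift and the infinite product expansion of Theorem~\ref{th:Borcherds} are indexed by lattice vectors, which restrict in the obvious way), pulling back the $A_4$ identity $\Grit(\Theta_{A_4})=\Borch(-\Theta_{A_4}|T_{-}(2)/\Theta_{A_4})$ along each $\iota_i$ yields
$$
\Grit(\Theta_{L_4}^{(i)})=\Borch\!\left(-\frac{\Theta_{L_4}^{(i)}|T_{-}(2)}{\Theta_{L_4}^{(i)}}\right),\qquad i=1,2,
$$
and substitution completes the proof.

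I expect no serious obstacle. The only mildly delicate point is verifying that $\Grit$ and $\Borch$ are natural under the embeddings $\iota_i$, but this follows immediately from the explicit Fourier expansions recalled in Section~2 since both sides are determined by the same sets of Fourier coefficients. The corollary is thus a clean consequence of the additivity of the Gritsenko lift combined with Lemma~\ref{lem:fullJ}; the resulting linear relation among Borcherds products is \emph{a priori} striking because $\Borch$ is multiplicative rather than additive, and it reflects the particular feature that all three Borcherds products in question coincide with additive lifts.
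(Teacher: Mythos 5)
Your proposal is correct and follows essentially the same route as the paper: identify the left-hand side as $\Grit(\Theta_{L_4})$ via Theorem~\ref{th:wt2-4}, use additivity of the Gritsenko lift together with Lemma~\ref{lem:fullJ}, and recognize each $\Grit(\Theta_{L_4}^{(i)})$ as a Borcherds product by transporting the $A_4$ case of \cite{GW18} along the finite-index embeddings $L_4\hookrightarrow A_4^\vee(5)$. The paper phrases the last step more briefly (``$\Theta_{L_4}^{(1)}$ and $\Theta_{L_4}^{(2)}$ are variants of $\Theta_{A_4}$''), but the content is the same.
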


\begin{proof}
The main theorem of \cite{GW18} says that $\Grit(\Theta_{A_4})$ is a Borcherds product. From the constructions, we observe that $\Theta_{L_4}^{(1)}$ and $\Theta_{L_4}^{(2)}$ are variants of $\Theta_{A_4}$. Thus, $\Grit(\Theta_{L_4}^{(1)})$ and $\Grit(\Theta_{L_4}^{(2)})$ are all Borcherds products. Since the Gritsenko lift is additive, we conclude from Lemma \ref{lem:fullJ} that $\Grit(\Theta_{L_4})=\Grit(\Theta_{L_4}^{(1)})-\Grit(\Theta_{L_4}^{(2)})$. By Theorem \ref{th:wt2-4}, $\Grit(\Theta_{L_4})$ is also a Borcherds product. Then we get the expected identity for Borcherds products.
\end{proof}

We now prove the first main theorem.
\begin{proof}[Proof of Theorem \ref{th:wt2}]
Similar to the proof of Theorem 1.1 in \cite[\S 4.1]{GW18}, we use the specializations of the identity in Theorem \ref{th:wt2-4} to prove this result. 
By taking $\mathfrak{z}=z(a_1-a_3-a_4)\alpha_1+ z(a_2+a_3+a_4)\alpha_2+z(a_3+a_4)\alpha_3+za_4\alpha_4$, we finish the proof.
\end{proof}

As a direct consequence of Corollary \ref{cor:wt2-4}, we have the following.
\begin{corollary}
Let $\mathbf{a}=(a_1, a_2, a_3, a_4)\in \ZZ^4$. We define two theta blocks
\begin{align*}
\varphi^{(1)}_{2,\mathbf{a}}&=\eta^{-6}
\vartheta_{a_1}\vartheta_{a_2}\vartheta_{a_2+a_3}\vartheta_{a_2+2a_3+2a_4}
\vartheta_{a_1+a_2}\vartheta_{a_2+a_3+2a_4}\vartheta_{a_3}
\vartheta_{a_1-a_3}\vartheta_{a_3+2a_4}\vartheta_{a_1+a_2+a_3+2a_4},\\
\varphi^{(2)}_{2,\mathbf{a}}&=\eta^{-6}
\vartheta_{a_1-a_3-a_4}\vartheta_{a_2}\vartheta_{a_2+a_3}\vartheta_{a_2+2a_3+2a_4}
\vartheta_{a_1+a_2+a_3+a_4}\vartheta_{a_2+a_3+2a_4}\vartheta_{a_3}
\vartheta_{a_1+a_4}\vartheta_{a_3+2a_4}\vartheta_{a_1+a_2+a_4}.
\end{align*}
For any $\mathbf{a}\in\ZZ^4$ such that none of $\varphi_{2,\mathbf{a}}$, $\varphi^{(1)}_{2,\mathbf{a}}$ and $\varphi^{(2)}_{2,\mathbf{a}}$ is identically zero, we have 
$$
\varphi_{2,\mathbf{a}}=\varphi^{(1)}_{2,\mathbf{a}}-\varphi^{(2)}_{2,\mathbf{a}},
$$
$$
\Borch\left(-\frac{\varphi_{2,\mathbf{a}}|T_{-}(2)}{\varphi_{2,\mathbf{a}}}
\right)=\Borch\left(-\frac{\varphi^{(1)}_{2,\mathbf{a}}|T_{-}(2)}{\varphi^{(1)}_{2,\mathbf{a}}}
\right)-\Borch\left(-\frac{\varphi^{(2)}_{2,\mathbf{a}}|T_{-}(2)}{\varphi^{(2)}_{2,\mathbf{a}}}
\right).
$$
\end{corollary}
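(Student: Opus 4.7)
The plan is to deduce both identities by applying the linear specialization of $\mathfrak{z}$ already used in the proof of Theorem \ref{th:wt2} to the Jacobi form identity of Lemma \ref{lem:fullJ} and to the Borcherds product identity of Corollary \ref{cor:wt2-4}. Setting
$$\mathfrak{z}=z(a_1-a_3-a_4)\alpha_1+z(a_2+a_3+a_4)\alpha_2+z(a_3+a_4)\alpha_3+za_4\alpha_4,$$
one has $(z_1,z_2,z_3,z_4)=z\cdot(a_1-a_3-a_4,\,a_2+a_3+a_4,\,a_3+a_4,\,a_4)$, and the six theta factors $\vartheta(z_2\pm z_3)\vartheta(z_2\pm z_4)\vartheta(z_3\pm z_4)$ common to $\Theta_{L_4}$, $\Theta_{L_4}^{(1)}$, $\Theta_{L_4}^{(2)}$ collapse to $\vartheta_{a_2}\vartheta_{a_2+2a_3+2a_4}\vartheta_{a_2+a_3}\vartheta_{a_2+a_3+2a_4}\vartheta_{a_3}\vartheta_{a_3+2a_4}$. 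A direct inspection of the remaining four factors of each form shows that the three Jacobi forms pull back precisely to $\varphi_{2,\mathbf{a}}$, $\varphi^{(1)}_{2,\mathbf{a}}$, $\varphi^{(2)}_{2,\mathbf{a}}$, so Lemma \ref{lem:fullJ} immediately yields $\varphi_{2,\mathbf{a}}=\varphi^{(1)}_{2,\mathbf{a}}-\varphi^{(2)}_{2,\mathbf{a}}$.

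For the Borcherds product identity, I would exploit that, by Theorem \ref{th:wt2-4} together with the analogous statements for $\Theta_{L_4}^{(1)}$ and $\Theta_{L_4}^{(2)}$ (these are pullbacks of $\Theta_{A_4}\in J_{2,A_4^\vee(5),1}$ and are therefore covered by the main result of \cite{GW18}), each of the three Borcherds products in Corollary \ref{cor:wt2-4} coincides with the Gritsenko lift of the corresponding Jacobi form. Pulling an orthogonal modular form on $\cD(2U\oplus L_4(-1))$ back to the one-dimensional rational quadratic sub-domain cut out by our specialization sends $\Grit$ to $\Grit$ on $\cD(2U\oplus\latt{-2N(\mathbf{a})})$, because $T_{-}(m)$ acts by summation on Fourier coefficients in a way that is compatible with linear substitution in $\mathfrak{z}$. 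The three restricted Gritsenko lifts are thus paramodular Borcherds products: the one for $\varphi_{2,\mathbf{a}}$ by Theorem \ref{th:wt2} and the two others by the already-proved theta-block conjecture for the $A_4$ series. Substituting back into the specialized form of Corollary \ref{cor:wt2-4} yields the desired identity of paramodular Borcherds products.

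No genuine obstacle is expected; the argument is essentially a packaging of Lemma \ref{lem:fullJ}, Corollary \ref{cor:wt2-4}, and the specialization bookkeeping already carried out for Theorem \ref{th:wt2}. The nonvanishing hypothesis on $\varphi_{2,\mathbf{a}}$, $\varphi^{(1)}_{2,\mathbf{a}}$, $\varphi^{(2)}_{2,\mathbf{a}}$ is used only to guarantee that the three quotients $-\varphi|T_{-}(2)/\varphi$ serving as inputs of the Borcherds products are well defined.
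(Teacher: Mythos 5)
Your proposal is correct and follows essentially the same route as the paper, which obtains this corollary directly by applying the specialization $\mathfrak{z}=z(a_1-a_3-a_4)\alpha_1+z(a_2+a_3+a_4)\alpha_2+z(a_3+a_4)\alpha_3+za_4\alpha_4$ (already used in the proof of Theorem \ref{th:wt2}) to the identity $\Theta_{L_4}=\Theta_{L_4}^{(1)}-\Theta_{L_4}^{(2)}$ of Lemma \ref{lem:fullJ} and to the Borcherds product identity of Corollary \ref{cor:wt2-4}. Your bookkeeping of the ten theta factors under this substitution checks out, and your justification of the restriction step via the Gritsenko-lift descriptions of the three Borcherds products is consistent with what the paper leaves implicit.
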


The first example of the corollary is given by $\mathbf{a}=(3,1,1,1)$. In this case,  the index of Jacobi forms is $67$. The corresponding theta blocks are
$$
\eta^{-6}\vartheta^3\vartheta_2^2\vartheta_3^2\vartheta_4\vartheta_5\vartheta_8=\eta^{-6}\vartheta^2\vartheta_2^2\vartheta_3^2\vartheta_4^2\vartheta_5\vartheta_7-\eta^{-6}\vartheta^3\vartheta_2\vartheta_3\vartheta_4^2\vartheta_5^2\vartheta_6.
$$

\section{Proof of Theorem \ref{th:wt3}}
This section aims to prove Theorem \ref{th:wt3}. The proof is similar to the proof of Theorem \ref{th:wt2} in the previous section. 

Analysing Scheithauer's list of reflective modular forms (see \cite{Sch06}), we found that there is a strongly reflective modular form of singular weight $3$ with the complete 4-reflective divisors, 6-reflective divisors and 12-reflective divisors for the lattice 
$M_{2,8}=U\oplus U(6)\oplus E_6(-1)$. We denote this modular form by $\Psi_3^{\Sch}$. Note that $\Psi_3^{\Sch}$ was constructed by Scheithauer as the Borcherds product of a vector-valued modular form which is a lifting of the eta-quotient $\eta^{-1}(\tau)\eta^{-4}(2\tau)\eta^{-5}(3\tau)\eta^{4}(6\tau)$. The maximal modular group of $\Psi_3^{\Sch}$ is the full modular group $\Orth^+(M_{2,8})$. The divisor of $\Psi_3^{\Sch}$ is 
\begin{equation}\label{eq:div3}
\div(\Psi_3^{\Sch})=\sum_{\substack{v\in M_{2,8}^\vee \; \text{primitive}\\ (v,v)=-1,\, \ord (v)= 2}} \cD_v +  \sum_{\substack{v\in M_{2,8}^\vee \; \text{primitive}\\ (v,v)=-\frac{2}{3},\, \ord (v)= 3}} \cD_v + \sum_{\substack{v\in M_{2,8}^\vee \; \text{primitive}\\ (v,v)=-\frac{1}{3},\, \ord (v)= 6}} \cD_v.
\end{equation} 
The lattice $M_{2,8}(-1)$ is of level $6$ and has genus $\II_{8,2}(2_{\II}^{+2}3^{-3})$. It is clear that the discriminant group of $M_{2,8}$ has 3 generators. By \cite{Nik80}, there exists an even positive definite lattice $L$ of rank $6$ such that $M_{2,8}\cong 2U\oplus L(-1)$. By \cite{LMFDB}, the genus $\II_{6,0}(2_{\II}^{+2}3^{-3})$ conatins only one class and the label of this lattice is $6.108.6.1.1$. Thus, the lattice $L$ is unique up to isomorphism and we denote it by $L_6$.  The  matrix model of  $L_6$ and its inverse are respectively 
\begin{align*}
&L_6=\left( \begin{array}{cccccc}
4 & 2 & 0 & 0 & -2 & 0 \\ 
2 & 4 & 0 & 0 & -1 & 0 \\ 
0 & 0 & 2 & -1 & 0 & 0 \\ 
0 & 0 & -1 & 2 & 0 & 0 \\ 
-2 & -1 & 0 & 0 & 2 & 1 \\ 
0 & 0 & 0 & 0 & 1 & 4
\end{array} \right),& &
L_6^{-1}=\frac{1}{6}\left( \begin{array}{cccccc}
4 & -1 & 0 & 0 & 4 & -1 \\ 
-1 & 2 & 0 & 0 & 0 & 0 \\ 
0 & 0 & 4 & 2 & 0 & 0 \\ 
0 & 0 & 2 & 4 & 0 & 0 \\ 
4 & 0 & 0 & 0 & 8 & -2 \\ 
-1 & 0 & 0 & 0 & -2 & 2
\end{array} \right).&
\end{align*}
Let $\beta_i$, $1\leq i \leq 6$, be a basis of $L_6$ corresponding to the above matrix and $u_i$, $1\leq i \leq 6$, be the associated dual basis.
The lattice $L_6$ has level $6$ and determinant $108$. We next consider $\Psi_3^{\Sch}$ as a reflective modular form on $2U\oplus L_6(-1)$. Then $\Psi_3^{\Sch}$ is a Borcherds product of a weak Jacobi form $\Psi_{L_6} \in J_{0,L_6,1}^w$. We then assume that $\Psi_{L_6}$ has a Fourier expansion of the form
$$
\Psi_{L_6}(\tau,\mathfrak{z})=\sum_{n\in \NN, \ell\in L_{6}^\vee}f(n,\ell)q^n\zeta^\ell.
$$
By Theorem \ref{th:Borcherds}, $f(0,0)=6$, and for any $\ell\neq 0$, the $q^0$-term $f(0,\ell)\zeta^\ell$ determines a divisor $\cD_{(0,0,\ell,1,0)}$ which must be reflective. Then we have that either $f(0,\ell)=0$, or $f(0,\ell)=1$ and $\ell$ satisfies one of the following conditions
\begin{itemize}
\item[(i)] $(\ell,\ell)=1$ and $\ell$ has order $2$ in $L_6^\vee/L_6$; 
\item[(ii)] $(\ell,\ell)=\frac{2}{3}$ and $\ell$ has order $3$ in $L_6^\vee/L_6$; 
\item[(iii)] $(\ell,\ell)=\frac{1}{3}$ and $\ell$ has order $6$ in $L_6^\vee/L_6$.
\end{itemize}
By direct calculations, up to sign
\begin{enumerate}
\item the vectors of type (i) are $2u_1+u_2-u_5$, $u_5+u_6$;
\item the vectors of type (ii) are $u_3$, $u_4$, $u_3-u_4$, $u_2-u_6$, $u_2+u_6$;
\item the vectors of type (iii) are $u_2$, $u_6$.
\end{enumerate}
We now have determined the $q^0$-term of $\Psi_{L_6}$. By Theorem \ref{th:Borcherds}, the first Fourier-Jacobi coefficient of $\Psi_3^{\Sch}$ is known to be a pure theta block. In the coordinates $\mathfrak{z}=\sum_{i=1}^6 z_i\beta_i$, this theta block can be written as
\begin{align*}
\Theta_{L_6}(\tau,\mathfrak{z})=\eta^{-3}&\vartheta(z_3)\vartheta(z_4)\vartheta(z_3-z_4)\vartheta(z_2-z_6)\vartheta(z_2+z_6)\\
&\vartheta(2z_1+z_2-z_5)\vartheta(z_5+z_6)\vartheta(z_2)\vartheta(z_6).
\end{align*}
This function defines a holomorphic Jacobi form of weight 3 and index 1 for $L_6$.

\begin{theorem}\label{th:wt3-6}
The Borcherds product $\Psi_3^{\Sch}=\Borch(\Psi_{L_6})$ is a Gritsenko lift. In other word, 
$$
\Psi_3^{\Sch}=\Borch(\Psi_{L_6})=\Grit(\Theta_{L_6}) \quad \text{and} \quad \Psi_{L_6}=-\frac{\Theta_{L_6}\lvert T_{-}(2)}{\Theta_{L_6}}.
$$
\end{theorem}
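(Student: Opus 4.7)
The proof follows the same blueprint as the proof of Theorem \ref{th:wt2-4}. First, as in that argument, the identity $\Psi_{L_6} = -\Theta_{L_6}|T_-(2)/\Theta_{L_6}$ is an automatic consequence of $\Borch(\Psi_{L_6}) = \Grit(\Theta_{L_6})$ obtained by comparing the $q^0$- and $q^1$-Fourier--Jacobi coefficients at the standard $1$-dimensional cusp. Since $\Psi_3^{\Sch}$ and $\Grit(\Theta_{L_6})$ have the same weight $3$, by K\"ocher's principle it suffices to establish the divisor inclusion $\div(\Grit(\Theta_{L_6})) \supset \div(\Psi_3^{\Sch})$ as displayed in \eqref{eq:div3}.

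By \cite[Lemma 3.8]{GW18}, the Gritsenko lift $\Grit(\Theta_{L_6})$ vanishes on each rational quadratic divisor $\cD_{(0,0,\ell,1,0)}$ for every $\ell$ of types (i), (ii), (iii) enumerated above, since these vectors are exactly those supporting the singular Fourier coefficients of the input theta block. Since $L_6$ has squarefree level $6$, Lemma \ref{lem:orbit} applies and tells us that the $\Orth^+(M_{2,8})$-orbit of a primitive vector $v \in M_{2,8}^\vee$ with $(v,v) < 0$ is determined by the pair $((v,v), \ord(v))$. Consequently, once $\Grit(\Theta_{L_6})$ is known to be modular for the full group $\Orth^+(M_{2,8})$ (possibly with a character), the vanishing established in the previous sentence propagates to \emph{all} of the $4$-, $6$-, and $12$-reflective divisors in $\div(\Psi_3^{\Sch})$, which yields the required inclusion.

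The remaining task, and the main step, is to verify that $\Grit(\Theta_{L_6})$ is modular for $\Orth^+(M_{2,8})$. Following the strategy of Lemma \ref{lem:full}, I invoke Borcherds' reconstruction of the additive lift as an integral of the input vector-valued modular form against the Siegel theta function: this integral is $\Orth^+(M_{2,8})$-modular (up to character) as soon as the vector-valued modular form in $M_0(\rho_{D(L_6)})$ corresponding to $\Theta_{L_6}$ is invariant under the action of $\Orth(D(L_6))$ up to a character. I propose to compute $\dim M_0(\rho_{D(L_6)})$ via the Eichler--Skoruppa tensor decomposition of the Weil representation with respect to the $2$-part and $3$-part of $D(L_6)$ (using the tables of \cite{ES17}), and then to produce an explicit basis by pullbacks of the weight-$3$ Jacobi form on $A_2^\vee(3)^3$ used in the $3A_2$ case of \cite{Gri18,GSZ19} along several inequivalent embeddings $L_6 \hookrightarrow A_2^\vee(3)^3$ (or a closely related symmetric target).

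The main obstacle will be identifying $\Theta_{L_6}$ with the specific $\Orth(D(L_6))$-invariant (up to character) element of $M_0(\rho_{D(L_6)})$. I expect to achieve this, as in the weight $2$ case, by writing $\Theta_{L_6} = \Theta_{L_6}^{(1)} - \Theta_{L_6}^{(2)}$ for two pullbacks arising from two different choices of basis of the larger symmetric lattice; the difference will be the unique (up to scalar) combination whose principal part is supported only on classes of the maximal order $6$ in $D(L_6)$, hence is $\Orth(D(L_6))$-invariant up to a character of order at most $2$. The analogue of the Riemann theta relation \eqref{eq:RTR} that forces this decomposition should come from the theta relations for the root lattices involved in the tower $2A_1\oplus B_2\oplus A_2$, and the final check of the equality of the vector-valued modular forms reduces to a finite (computable) comparison of Fourier coefficients. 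Once this is in place, Theorem \ref{th:wt3-6} follows exactly as Theorem \ref{th:wt2-4} did.
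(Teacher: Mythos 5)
Your proposal follows essentially the same route as the paper: reduce to the divisor inclusion via K\"ocher's principle, use \cite[Lemma 3.8]{GW18} plus Lemma \ref{lem:orbit} to propagate vanishing to all reflective divisors once full $\Orth^+(M_{2,8})$-modularity is known, and establish that modularity by showing the vector-valued input is $\Orth(D(L_6))$-invariant up to a character, using the dimension count from \cite{ES17} and the decomposition $\Theta_{L_6}=\Theta_{L_6}^{(1)}-\Theta_{L_6}^{(2)}$ into pullbacks of $\Theta_{3A_2}$ along two embeddings $L_6\hookrightarrow 3A_2$. This matches the paper's argument (Lemmas 4.2--4.4) in all essential respects.
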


\begin{proof}
Similar to the proof of Theorem \ref{th:wt2-4}, we only need to show that the maximal modular group of $\Grit(\Theta_{L_6})$ is the full modular group. We will prove this in Lemma \ref{lem:full3} below.
\end{proof}

\begin{lemma}
The space $J_{2,L_6,1}$ has dimension $2$ and it has the following basis
\begin{align*}
\Theta_{L_6}^{(1)}(\tau,\mathfrak{z})=\eta^{-3}&\vartheta(z_3)\vartheta(z_4)\vartheta(z_3-z_4)\vartheta(z_2-z_6)\vartheta(z_2+z_6)\\
&\vartheta(z_1)\vartheta(z_1+z_2+z_6)\vartheta(z_1-z_5)\vartheta(z_1+z_2-z_5-z_6),\\
\Theta_{L_6}^{(2)}(\tau,\mathfrak{z})=\eta^{-3}&\vartheta(z_3)\vartheta(z_4)\vartheta(z_3-z_4)\vartheta(z_2-z_6)\vartheta(z_2+z_6)\\
&\vartheta(z_1-z_5-z_6)\vartheta(z_1+z_2-z_5)\vartheta(z_1+z_2)\vartheta(z_1+z_6).
\end{align*}
Correspondingly, the space $M_0(\rho_{D(L_6)})$ of modular forms of weight $0$ for $\rho_{D(L_6)}$ has dimension $2$. The modular forms corresponding to the above two Jacobi forms form a basis of this space.  
\end{lemma}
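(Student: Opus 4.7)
The plan is to mirror the proof of the analogous dimension lemma for $L_4$ in the previous section. Since $\rank(L_6)=6$, the isomorphism recalled at the end of Section~2 identifies $J_{3,L_6,1}$ with $M_0(\rho_{D(L_6)})$, so it is enough to show that the latter space has dimension $2$ and to exhibit two linearly independent elements of $J_{3,L_6,1}$.

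To bound the dimension from above, I would use that $L_6$ has squarefree level $6$, so the discriminant form splits as an orthogonal direct sum of its $2$-part $2_{\II}^{+2}$ and its $3$-part $3^{-3}$, and the Weil representation $\rho_{D(L_6)}$ factors as a corresponding tensor product. Applying \cite[Remark~3.2]{ES17} exactly as in the $L_4$ case yields
\[
 \dim M_0(\rho_{D(L_6)}) = \dim M_0(\rho_{2_{\II}^{+2}}) \cdot \dim M_0(\rho_{3^{-3}}) = 2 \cdot 1 = 2,
\]
where the first factor is the same one used in the preceding lemma (via $D(U(2)) \cong 2_{\II}^{+2}$) and the second is read off from the relevant table of \cite{ES17}.

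Next I would construct $\Theta_{L_6}^{(1)}$ and $\Theta_{L_6}^{(2)}$ as pullbacks of a single weight-$3$ Jacobi form of index $1$ supported on a richer rank-$6$ lattice whose $3$-part governs $D(L_6)_3 = 3^{-3}$ (the natural candidate being of $E_6$-type, in parallel with $A_4^\vee(5)$ appearing in the $L_4$ case), along two inequivalent embeddings of $L_6$ into that lattice that differ by an automorphism of the $3$-part. This directly mirrors the two embeddings of $L_4$ into $A_4^\vee(5)$ used in the previous section, and produces two pure theta blocks of weight $3$ and index $1$ for $L_6$ whose holomorphy is inherited from that of the weight-$3$ theta block on the ambient lattice.

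Linear independence of $\Theta_{L_6}^{(1)}$ and $\Theta_{L_6}^{(2)}$, combined with the upper bound $\dim J_{3,L_6,1} \le 2$, then forces the pair to be a basis. I would verify independence by specializing a single coordinate to zero (for example $z_1=0$) and observing that the sets of linear forms appearing as theta arguments in the two specializations differ, ruling out any nontrivial proportionality. The main obstacle I anticipate is bookkeeping: pinning down the ambient lattice and the two embeddings precisely enough that the pullbacks coincide on the nose with the stated explicit expressions. Once this setup is fixed, both the pullback computation and the independence check are routine, and the final statement about the basis of $M_0(\rho_{D(L_6)})$ follows immediately from the isomorphism.
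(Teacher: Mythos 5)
Your proposal follows the paper's proof essentially verbatim: the dimension count via the $2$-part/$3$-part factorization of \cite[Remark 3.2]{ES17} (giving $2\times 1=2$) and the construction of the basis as pullbacks of a single distinguished weight-$3$ theta block along two embeddings are exactly the paper's argument. The one correction to your sketch is that the ambient rank-$6$ lattice is $3A_2$ (coming from $2U\oplus L_6\cong U\oplus U(2)\oplus 3A_2$, with $\Theta_{3A_2}$ the theta block of \cite{Gri18,GSZ19}), not a lattice of $E_6$-type; the two stated Jacobi forms are the pullbacks along two explicit index-$2$ embeddings $L_6\hookrightarrow 3A_2$ listed in the paper.
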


\begin{proof}
We first write $M_{2,8}(-1)=2U\oplus L_6 =U\oplus U(2)\oplus 3A_2$. By \cite[Remark 3.2]{ES17}, we have $\dim M_0(\rho_{D(L_6)})=\dim M_0(\rho_{D(U(2))})\times\dim M_0(\rho_{D(3A_2)})$, where the latter two spaces correspond to the $2$-part and $3$-part in \cite{ES17}.  By \cite[Table 1 and Table 3]{ES17}, we know $\dim M_0(\rho_{D(U(2))})=2$ and $\dim M_0(\rho_{D(3A_2)})=1$. Thus $\dim M_0(\rho_{D(L_6)})=2$. We next construct the basis using the pullback of Jacobi forms. By \cite[\S 5.4]{Gri18} or \cite[Theorem 13.5]{GSZ19}, there is a Jacobi form $\Theta_{3A_2}\in J_{2,3A_2,1}$. There are two embeddings from $L_6$ into $3A_2$.  Firstly, the six vectors  $(\beta_2+\beta_6)/2$, $-(\beta_1+\beta_5)$, $\beta_3$, $\beta_4$, $\beta_5$ and $(\beta_2-\beta_6)/2$  form a basis of $3A_2$ and the corresponding pullback of $\Theta_{3A_2}$ is $\Theta_{L_6}^{(1)}$. Secondly, the six vectors  $(-\beta_1+\beta_2+\beta_6)/2$, $-\beta_5$, $\beta_3$, $\beta_4$, $\beta_1+\beta_5$ and $(-\beta_1+\beta_2-\beta_6)/2$  form another basis of $3A_2$ and the corresponding pullback of $\Theta_{3A_2}$ is $\Theta_{L_6}^{(2)}$.
\end{proof}

Similar to Remark \ref{rem:anotherconstruction}, there is another construction of the above basis in the context of vector-valued modular forms. In a similar way, we can demonstrate the following two lemmas and corollary.

\begin{lemma}
The vector-valued modular form corresponding to $\Theta_{L_6}$ is invariant under the orthogonal group $\Orth(D(L_6))$ up to a character of order $2$ and we have $\Theta_{L_6}=\Theta_{L_6}^{(1)}-\Theta_{L_6}^{(2)}$.
\end{lemma}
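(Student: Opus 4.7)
The plan is to transcribe the proof of Lemma \ref{lem:fullJ} to the lattice $L_6$. Following Remark \ref{rem:anotherconstruction}, I would decompose $M_{2,8}(-1)=U\oplus U_1(2)\oplus 3A_2 < U\oplus U_1\oplus 3A_2$, take the unique-up-to-scalar modular form $F=\sum_{\gamma\in D(3A_2)}a_\gamma\mathbf{e}_\gamma\in M_0(\rho_{D(3A_2)})$ corresponding to $\Theta_{3A_2}$, and apply the lifting $\big\uparrow_{U_1\oplus 3A_2}^{U_1(2)\oplus 3A_2}$ of \cite[Corollary 2.2]{Ma18b} for each of the two natural bases of $U_1$, namely $\{\frac{1}{2} e, f\}$ and $\{e, \frac{1}{2} f\}$. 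This produces two weight-$0$ vector-valued modular forms
\begin{align*}
F^{(1)} &= \sum_{\gamma\in D(3A_2)} a_\gamma\bigl(\mathbf{e}_\gamma + \mathbf{e}_{\frac{1}{2} e + \gamma}\bigr),\\
F^{(2)} &= \sum_{\gamma\in D(3A_2)} a_\gamma\bigl(\mathbf{e}_\gamma + \mathbf{e}_{\frac{1}{2} f + \gamma}\bigr)
\end{align*}
for $\rho_{D(L_6)}$, which correspond under the theta-decomposition to $\Theta_{L_6}^{(1)}$ and $\Theta_{L_6}^{(2)}$ respectively.

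The heart of the argument is to show that the difference
$$
F^{(1)}-F^{(2)}=\sum_{\gamma\in D(3A_2)} a_\gamma\bigl(\mathbf{e}_{\frac{1}{2}e+\gamma}-\mathbf{e}_{\frac{1}{2}f+\gamma}\bigr)
$$
is supported only on classes of order $6$ in $D(L_6)$: the trivial and order-$3$ components of $F^{(1)}$ and $F^{(2)}$ coincide termwise and hence cancel in the difference, while the order-$2$ contributions (coming from the $\gamma=0$ summand) vanish once one checks $a_0=0$, which is the analog of the corresponding fact used implicitly for $\Theta_{A_4}$ in Lemma \ref{lem:fullJ}. Since $\dim M_0(\rho_{D(L_6)})=2$ and $F^{(1)}, F^{(2)}$ are linearly independent (they share the same nontrivial order-$3$ Fourier support), the subspace of $M_0(\rho_{D(L_6)})$ supported only on order-$6$ classes is one-dimensional, spanned by $F^{(1)}-F^{(2)}$. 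The action of $\Orth(D(L_6))$ preserves orders and hence this subspace, so it acts on $F^{(1)}-F^{(2)}$ by a character; integrality of the Fourier coefficients then forces the character to have order $2$.

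The identity $\Theta_{L_6}=\Theta_{L_6}^{(1)}-\Theta_{L_6}^{(2)}$ then follows by matching the leading Fourier-Jacobi coefficients on both sides against the theta-block expansion of the first Fourier-Jacobi coefficient of $\Psi_3^{\Sch}$ predicted by Theorem \ref{th:Borcherds}; this fixes the scalar to be $1$.

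The main obstacle is the $a_0=0$ claim: verifying this requires either a direct calculation with the explicit form of the invariant in $M_0(\rho_{D(3A_2)})$, or an indirect argument using the theta-block vanishing $\Theta_{3A_2}(\tau,0)=0$ combined with the fact that the divisor of $\Psi_3^{\Sch}$ contains no $2$-reflective divisor. Everything else in the argument is a routine transcription of the $L_4$ case.
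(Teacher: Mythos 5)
Your proof is correct and takes essentially the same route as the paper, which for this lemma simply transcribes the $L_4$ argument of Lemma~\ref{lem:fullJ} via the analogue of Remark~\ref{rem:anotherconstruction} for the decomposition $U\oplus U_1(2)\oplus 3A_2$. Your explicit attention to the vanishing $a_0=0$ (needed so that no order-$2$ classes survive in $F^{(1)}-F^{(2)}$), justified by the order-one vanishing of $\Theta_{3A_2}$ in $q$, fills in a step the paper leaves implicit.
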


\begin{lemma}\label{lem:full3}
The function $\Grit(\Theta_{L_6})$ is a modular form of weight $3$ for $\Orth^+(M_{2,8})$ with a character of order $2$.
\end{lemma}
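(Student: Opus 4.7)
The plan is to mirror, essentially verbatim, the proof of Lemma \ref{lem:full}. The main tool is Borcherds' reconstruction of the additive Jacobi lift in \cite[Theorem 14.3]{Bor98}, which realises $\Grit(\Theta_{L_6})$ as a regularised theta integral: up to normalisation, it is the integral over a fundamental domain of $\SL_2(\ZZ)$ of the inner product between the vector-valued modular form associated with $\Theta_{L_6}$ and the Siegel theta function attached to $M_{2,8}$. The Siegel theta function is invariant under the full orthogonal group $\Orth^+(M_{2,8})$, and the induced action of $\Orth^+(M_{2,8})$ on the output factors, through the natural homomorphism $\mathfrak{J}\colon \Orth^+(M_{2,8})\to \Orth(D(L_6))$, as an action on the vector-valued input. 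Hence any transformation law of the input under $\Orth(D(L_6))$ propagates, via pullback by $\mathfrak{J}$, to a transformation law of $\Grit(\Theta_{L_6})$ under $\Orth^+(M_{2,8})$ with the same (pulled-back) character.

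Once this is set up, I would simply invoke the immediately preceding lemma, which asserts that the vector-valued modular form corresponding to $\Theta_{L_6}$ is invariant under $\Orth(D(L_6))$ up to a character of order $2$. Combining this with Borcherds' integral representation yields that $\Grit(\Theta_{L_6})$ is modular for $\Orth^+(M_{2,8})$ with a character of order $2$; the weight is $3$, inherited from the weight of $\Theta_{L_6}$ as a holomorphic Jacobi form of index $1$ for $L_6$.

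The substantive content of the argument, namely the identity $\Theta_{L_6} = \Theta_{L_6}^{(1)} - \Theta_{L_6}^{(2)}$ and the verification that this particular combination has the required $\Orth(D(L_6))$-covariance, has already been carried out in the preceding lemma; see also Remark \ref{rem:anotherconstruction} and the parallel construction via the lifting of \cite[Corollary 2.2]{Ma18b}. No fresh obstacle appears at this step: the proof of Lemma \ref{lem:full} transfers word-for-word with the replacements $D(L_4)\leadsto D(L_6)$, $A_4^\vee(5)\leadsto 3A_2$ and $M_{2,6}\leadsto M_{2,8}$. The only point one might worry about is whether the character on $\Orth^+(M_{2,8})$ is genuinely of order two rather than trivial, but this is forced by the character of order two on $\Orth(D(L_6))$ exhibited in the preceding lemma together with the surjectivity of $\mathfrak{J}$ from \cite[Theorem 1.14.2]{Nik80}.
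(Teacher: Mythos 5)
Your proposal is correct and follows essentially the same route as the paper, which proves Lemma \ref{lem:full3} by declaring it analogous to Lemma \ref{lem:full}: Borcherds' reconstruction of the additive lift as a theta integral (\cite[Theorem 14.3]{Bor98}), the $\Orth^+(M_{2,8})$-invariance of the Siegel theta function, and the $\Orth(D(L_6))$-invariance up to an order-two character of the vector-valued input established in the preceding lemma. No discrepancy to report.
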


\begin{corollary}
We have the following equality
$$
\Borch\left(-\frac{\Theta_{L_6}|T_{-}(2)}{\Theta_{L_6}}  \right)= \Borch\left(-\frac{\Theta_{L_6}^{(1)}|T_{-}(2)}{\Theta_{L_6}^{(1)}}  \right) - \Borch\left(-\frac{\Theta_{L_6}^{(2)}|T_{-}(2)}{\Theta_{L_6}^{(2)}}  \right).
$$
\end{corollary}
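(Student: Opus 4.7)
The plan is to imitate verbatim the proof of Corollary 3.7 from Section 3, with the lattice $A_4^\vee(5)$ replaced by $3A_2$ throughout. Concretely, the corollary will follow once we know that each of $\Grit(\Theta_{L_6}^{(1)})$ and $\Grit(\Theta_{L_6}^{(2)})$ is itself a Borcherds product of the expected shape, because the preceding lemma gives $\Theta_{L_6}=\Theta_{L_6}^{(1)}-\Theta_{L_6}^{(2)}$, the Gritsenko lift is additive in its input, and Theorem \ref{th:wt3-6} already identifies $\Grit(\Theta_{L_6})$ with $\Borch\!\bigl(-\Theta_{L_6}|T_{-}(2)/\Theta_{L_6}\bigr)$. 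Subtraction of the two individual Borcherds product identities then yields the one claimed in the corollary.

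First I would invoke the theta block conjecture for the weight $3$ series attached to the root system $3A_2$, established in \cite{Gri18,GSZ19}. This gives the identity
$$
\Grit(\Theta_{3A_2})=\Borch\left(-\frac{\Theta_{3A_2}|T_{-}(2)}{\Theta_{3A_2}}\right)
$$
on the orthogonal modular variety associated with $2U\oplus 3A_2(-1)$. Next I would use the two explicit embeddings $L_6\hookrightarrow 3A_2$ exhibited in the previous lemma to pull this identity back in two different ways. Pullback of Jacobi forms of lattice index commutes with the additive lifting $\Grit$ and with the Borcherds product $\Borch$, and it also commutes with $T_{-}(2)$ and with division. Consequently, along each embedding one obtains
$$
\Grit(\Theta_{L_6}^{(i)})=\Borch\left(-\frac{\Theta_{L_6}^{(i)}|T_{-}(2)}{\Theta_{L_6}^{(i)}}\right),\qquad i=1,2.
$$

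With these two identities in hand, the proof concludes in one line: by additivity of $\Grit$ and the lemma relating $\Theta_{L_6}$ to $\Theta_{L_6}^{(1)}$ and $\Theta_{L_6}^{(2)}$, we have $\Grit(\Theta_{L_6})=\Grit(\Theta_{L_6}^{(1)})-\Grit(\Theta_{L_6}^{(2)})$, and then the Borcherds product identity given by Theorem \ref{th:wt3-6} is rewritten as the difference of the two Borcherds products above. This is exactly the asserted equality.

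The only nontrivial point to verify, and the one I would write out explicitly, is the compatibility of pullback with the operator sending a weight $k$ index $1$ theta block $\phi$ to its associated weight $0$ Borcherds input $-\phi|T_{-}(2)/\phi$. This is routine but not purely formal: one must check that the pullback of $\Theta_{3A_2}|T_{-}(2)$ along each embedding $L_6\hookrightarrow 3A_2$ equals $\Theta_{L_6}^{(i)}|T_{-}(2)$, which follows directly from the coefficient-by-coefficient formula for $T_{-}(2)$ combined with the definition of the pullback on Fourier expansions. Aside from this bookkeeping the argument is entirely formal, and no new input from Scheithauer's classification or the Weil representation is needed in this step.
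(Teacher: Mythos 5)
Your argument is exactly the paper's: the paper proves the weight~$2$ analogue (Corollary 3.7) by citing the known Borcherds-product identity for $\Grit(\Theta_{A_4})$, observing that $\Theta_{L_4}^{(1)},\Theta_{L_4}^{(2)}$ are pullbacks of $\Theta_{A_4}$ so their lifts are Borcherds products, and then combining additivity of $\Grit$, the decomposition $\Theta_{L_4}=\Theta_{L_4}^{(1)}-\Theta_{L_4}^{(2)}$, and Theorem \ref{th:wt2-4}; for weight~$3$ it simply says the same argument applies with $3A_2$ in place of $A_4^\vee(5)$, which is precisely what you do. Your explicit attention to the compatibility of pullback with $T_-(2)$ is a point the paper leaves implicit, but it does not change the route.
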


The proof of  Theorem \ref{th:wt3} is now immediate.

\begin{proof}[Proof of Theorem \ref{th:wt3}]
By taking $\mathfrak{z}=z(b_2-b_4)\beta_1+ z(b_3+b_4)\beta_2+zb_5\beta_3-zb_6\beta_4+z(b_1-b_4)\beta_5+zb_4\beta_6$ in the identity of Theorem \ref{th:wt3-6}, we prove the theorem.
\end{proof}

\begin{corollary}
Let $\mathbf{a}=(a_1, a_2, a_3, a_4)\in \ZZ^4$. We define two theta blocks
\begin{align*}
\varphi^{(1)}_{3,\mathbf{b}}&=\eta^{-3}\vartheta_{b_5}
\vartheta_{b_6}\vartheta_{b_5+b_6}\vartheta_{b_3+2b_4}\vartheta_{b_3}
\vartheta_{b_2-b_4}\vartheta_{b_2+b_3+b_4}\vartheta_{b_2-b_1}\vartheta_{b_2+b_3-b_1},\\
\varphi^{(2)}_{3,\mathbf{b}}&=\eta^{-3}\vartheta_{b_5}
\vartheta_{b_6}\vartheta_{b_5+b_6}\vartheta_{b_3+2b_4}\vartheta_{b_3}
\vartheta_{b_2-b_1-b_4}\vartheta_{b_2+b_3+b_4-b_1}\vartheta_{b_2+b_3}\vartheta_{b_2}.
\end{align*}
For any $\mathbf{b}\in\ZZ^6$ such that none of $\varphi_{3,\mathbf{b}}$, $\varphi^{(1)}_{3,\mathbf{b}}$ and $\varphi^{(2)}_{3,\mathbf{b}}$ is identically zero, we have 
$$
\varphi_{3,\mathbf{b}}=\varphi^{(1)}_{3,\mathbf{b}}-\varphi^{(2)}_{3,\mathbf{b}},
$$
$$
\Borch\left(-\frac{\varphi_{3,\mathbf{b}}|T_{-}(2)}{\varphi_{3,\mathbf{b}}}
\right)=\Borch\left(-\frac{\varphi^{(1)}_{3,\mathbf{b}}|T_{-}(2)}{\varphi^{(1)}_{3,\mathbf{b}}}
\right)-\Borch\left(-\frac{\varphi^{(2)}_{3,\mathbf{b}}|T_{-}(2)}{\varphi^{(2)}_{3,\mathbf{b}}}
\right).
$$
\end{corollary}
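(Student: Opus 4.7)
The plan is to derive both equalities directly from the corresponding identities between the Jacobi forms of lattice index $L_6$ given in the preceding Lemma and Corollary, by specializing exactly as in the proof of Theorem \ref{th:wt3}.

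First, I would substitute
$$
\mathfrak{z}=z(b_2-b_4)\beta_1+ z(b_3+b_4)\beta_2+zb_5\beta_3-zb_6\beta_4+z(b_1-b_4)\beta_5+zb_4\beta_6
$$
into the equality $\Theta_{L_6}=\Theta_{L_6}^{(1)}-\Theta_{L_6}^{(2)}$. Expanding each of the nine theta factors in $\Theta_{L_6}$, $\Theta_{L_6}^{(1)}$, $\Theta_{L_6}^{(2)}$ under this substitution, and using $\vartheta(-z)=-\vartheta(z)$ to fix signs, the three pullbacks become $\varphi_{3,\mathbf{b}}$, $\varphi^{(1)}_{3,\mathbf{b}}$, $\varphi^{(2)}_{3,\mathbf{b}}$ up to a common sign depending only on $\mathbf{b}$. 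This is exactly the bookkeeping already used to identify $\varphi_{3,\mathbf{b}}$ as the pullback of $\Theta_{L_6}$ in the proof of Theorem \ref{th:wt3}; performing the same substitution in the other two blocks is mechanical. Cancelling the overall sign yields the first identity.

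For the second identity, I would apply the same specialization to the Borcherds-product equality of the preceding Corollary. The key compatibility is that the pullback of $\Borch(\phi)$ along a coordinate embedding $\mathfrak{z}\mapsto z\ell(\mathbf{b})$ equals $\Borch$ of the pullback of $\phi$, and that $T_{-}(2)$ commutes with such pullbacks on Fourier expansions. Both assertions follow directly from the infinite product and the explicit Hecke formula in Theorem \ref{th:Borcherds}: the pullback replaces each $\zeta^\ell$ by $\zeta^{(\ell,\ell(\mathbf{b}))}$ in the product, which is precisely the Borcherds product of the restricted Jacobi form (these are standard facts used throughout \cite{GW18, Gri18}). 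Under the hypothesis that none of $\varphi_{3,\mathbf{b}}, \varphi^{(1)}_{3,\mathbf{b}}, \varphi^{(2)}_{3,\mathbf{b}}$ vanishes identically, the three ratios $-\varphi|T_{-}(2)/\varphi$ are well-defined weak Jacobi forms of index $N(\mathbf{b})$ in one variable, and the specialization of the lattice-valued identity gives the claimed equality between the associated Borcherds products.

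No serious obstacle is anticipated: the first half is a direct verification of nine theta arguments, and the second half invokes the standard compatibility of the Borcherds construction and of $T_{-}(2)$ with linear specialization. The non-vanishing hypothesis on the three theta blocks is exactly what is needed to ensure all three quotients used as Borcherds inputs are well-defined; no additional non-degeneracy argument is required, and the identity between paramodular Borcherds products follows by a single specialization step from the orthogonal-modular version already established.
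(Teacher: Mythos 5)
Your proposal is correct and follows essentially the same route as the paper, which treats this corollary as an immediate consequence of the lattice-index identities $\Theta_{L_6}=\Theta_{L_6}^{(1)}-\Theta_{L_6}^{(2)}$ and the preceding Borcherds-product identity, specialized along the same substitution $\mathfrak{z}=z(b_2-b_4)\beta_1+z(b_3+b_4)\beta_2+zb_5\beta_3-zb_6\beta_4+z(b_1-b_4)\beta_5+zb_4\beta_6$ used in the proof of Theorem \ref{th:wt3}. Your handling of the overall sign (a common factor $-1$ from $\vartheta(z_4)=\vartheta(-zb_6)$ in all three blocks) and of the non-vanishing hypothesis matches what the paper implicitly relies on.
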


We take $\mathbf{b}=(1,2,1,3,1,1)$ in the above corollary. Then the index of Jacobi forms is $49$ and the corresponding theta blocks are
$$
\eta^{-3}\vartheta^4\vartheta_2\vartheta_3\vartheta_4^2\vartheta_7=\eta^{-3}\vartheta^3\vartheta_2^3\vartheta_3\vartheta_5\vartheta_7-\eta^{-3}\vartheta^5\vartheta_2^2\vartheta_6\vartheta_7.
$$

At the end of this paper, we give three remarks.

\begin{remark}
Theorem \ref{th:wt2-4} and Theorem \ref{th:wt3-6} support Conjecture 4.10 in \cite{GW18} which is a generalization of theta-block conjecture to the case of orthogonal modular forms. By \cite[Remark 3.11]{GW18}, our theorems imply that there exist the hyperbolizations of the affine Lie algebras of type $A_1\oplus B_3$ and type $2A_1\oplus B_2\oplus A_2$. We can also consider the same applications of our results as in \cite[\S 4.2 and \S 4.4]{GW18}.
\end{remark}

\begin{remark}
There are four infinite series of theta blocks of weight $2$ in \cite{GSZ19}. But we can not find suitable reflective modular forms of singular weight $2$ for the other two at present. If such modular forms exist, then we know from \cite{Dit18} that the associated lattice has non-squarefree level or can not be represented as $U\oplus U(N)\oplus L(-1)$, where $N$ is the level of the lattice and it is squarefree. Besides, when the level is not squarefree, we have no idea to pass through the second difficulty mentioned in the introduction.
\end{remark}

\begin{remark}
We summarize all reflective modular forms of singular weight classified in \cite{Sch06} whose pullbacks give paramodular forms (with trivial character) of weights $2$, $3$, $4$. There are exactly $7$ such modular forms. The pullbacks of the following five reflective modular forms of singular weight give infinite families of paramodular forms which are simultaneously Borcherds products and additive liftings, which support the theta-block conjecture
\begin{align*}
&\II_{10,2}(2_{\II}^{+2})& &U\oplus U(2)\oplus E_8\cong 2U\oplus D_8& &\text{weight 4}& &\text{(see \cite{GPY15, Gri18})} &\\
&\II_{8,2}(3^{-3})& &U\oplus U(3)\oplus E_6\cong 2U\oplus 3A_2& &\text{weight 3}& &\text{(see \cite{Gri18, GSZ19})} &\\
&\II_{6,2}(5^{+3})& &U\oplus U(5)\oplus A_4\cong 2U\oplus A_4^\vee(5)& &\text{weight 2}& &\text{(see \cite{GW18})} &\\
&\II_{8,2}(2_{\II}^{+2}3^{-3})& &U\oplus U(6)\oplus E_6\cong 2U\oplus L_6& &\text{weight 3}& &\text{(see this paper)} &\\
&\II_{6,2}(2_{\II}^{+2}5^{+3})& &U\oplus U(10)\oplus A_4\cong 2U\oplus L_4& &\text{weight 2}& &\text{(see this paper)}.&
\end{align*}

The pullbacks of the following two reflective modular forms of singular weight give infinite families of antisymmetric paramodular forms of weights $3$ and $4$ (see \cite{GW19})
\begin{align*}
&\II_{8,2}(7^{-5})& &U\oplus U(7)\oplus \text{Barnes-Craig lattice}\cong 2U\oplus A_6^\vee(7)& &\text{weight 3}&\\
&\II_{10,2}(5^{+6})& &U\oplus U(5)\oplus \text{Maass lattice}\cong 2U\oplus 2A_4^\vee(5)& &\text{weight 4}.&
\end{align*}

Besides, there are also $3$ reflective modular forms of singular weight large than $3$ on lattices of squarefree level and containing $2U$.  Two of them  for the following lattices give infinite families of antisymmetric paramodular forms of weights $8$ and $6$
\begin{align*}
&\II_{18,2}(2_{\II}^{+10})& &U\oplus U(2)\oplus \text{Barnes-Wall lattice}\cong 2U\oplus E_8(2)\oplus D_8\cong 2U\oplus D_8^\vee(2)\oplus 2D_4&\\
&\II_{14,2}(3^{-8})& &U\oplus U(3)\oplus \text{Coxeter-Todd lattice}\cong 2U\oplus E_6^\vee(3)\oplus 3A_2.&
\end{align*}
The last lattice is the even unimodular lattice $\II_{26,2}$. The associated reflective modular form was first constructed in \cite{Bor95} and it has $24$ different expansions at $24$ different 1-dimensional cusps related to $24$ classes of even positive definite unimodular lattices of rank $24$ (see \cite{Gri18}).
\end{remark}

In our proofs of theorems, it is a crucial step to show that the input of the Gritsenko lift is invariant under the orthogonal group of the discriminant form as a vector-valued modular form.  We here would like to ask the following general question.

\begin{question}
Let $F$ be a modular form with a character for the full orthogonal group of $M=2U\oplus L(-1)$. Assume that its first Fourier--Jacobi coefficient, denoted by $\varphi$, is a holomorphic Jacobi form of index $1$ with trivial character for $L$. Is $\varphi$ invariant under $\Orth(D(M))$ up to a character as a vector-valued modular form?
\end{question}

\bigskip

\noindent
\textbf{Acknowledgements} 
The author would like to thank Valery Gritsenko and Yingkun Li for helpful discussions. The author is grateful to Max Planck Institute for Mathematics in Bonn for its hospitality and financial support.

\bibliographystyle{amsplain}

\end{document}